\theoremstyle{plain}
\newtheorem{theorem}{Theorem}[section] 
\newtheorem{lemma}[theorem]{Lemma} 
\newtheorem{prop}[theorem] {Proposition} 
\newtheorem{cor}[theorem]  {Corollary}
\theoremstyle{definition} 
\newtheorem{assumption}{Assumption}
\theoremstyle{remark}
\newtheorem*{remark}{Remark} 
\newtheorem*{example}{Example}
\newcommand{\N}{\mathbb{N}}
\newcommand{\R}{\mathbb{R}}
\newcommand{\Z}{\mathbb{Z}}
\renewcommand{\P}{\mathbb{P}}
\newcommand{\dd}{\mathrm{d}} 
\newcommand{\eps}{\epsilon}
\newcommand{\vect}[1]{\boldsymbol{#1}}
\DeclareMathOperator{\Int}{Int}
\DeclareMathOperator{\supp}{supp}
\DeclareMathOperator{\const}{const}
\DeclareMathOperator{\dist}{dist}
\newcommand{\comp}{\mathrm{c}} 
\newcommand{\may}{\mathsf{May}}
\newcommand{\cl}{{\rm cl}} 
\newcommand{\per}{\mathrm{per}}
\newcommand{\can}{\mathrm{can}}
\def\1{{\mathchoice {1\mskip-4mu\mathrm l}      
{1\mskip-4mu\mathrm l} 
{1\mskip-4.5mu\mathrm l} {1\mskip-5mu\mathrm l}}}
\begin{document} 

\title[Continuum percolation for Gibbsian point processes]{Continuum percolation for Gibbsian point processes with attractive interactions}

\author{Sabine Jansen} 

\begin{abstract}
	We study the problem of continuum percolation in infinite volume Gibbs measures for particles with an attractive pair potential, with a focus on low temperatures (large $\beta$). The  main results are bounds on percolation thresholds $\rho_\pm(\beta)$ in terms of the density rather than the chemical potential or activity. In addition, we prove a variational formula for a large deviations rate function for cluster size distributions. This formula establishes a link with the Gibbs variational principle and a form of equivalence of ensembles, and allows us to combine knowledge on finite volume, canonical Gibbs measures with infinite volume, grand-canonical Gibbs measures. \\
	
\noindent \emph{Keywords:} continuum percolation, stochastic geometry,  point processes, large deviations; Gibbs measures, Gibbs variational principle. \\
	
	\noindent\emph{MSC subject classification:}  60D05, 60K35, 82C43.
\end{abstract}

\maketitle

\section{Introduction} 

The present article is concerned with percolation properties for Gibbsian point processes. We are interested in infinite volume Gibbs measures (in the sense of the Dobrushin-Lanford-Ruelle  conditions) for particles in $\R^d$ interacting via an attractive, finite range pair potential. The dimension is two or higher, $d\geq 2$. Around each particle $x$ of a random configuration $\omega$, draw a ball of radius $R$, for some fixed $R>0$. \emph{Percolation} occurs if the region in $\R^d$ covered by the union of such balls, $\cup_{x\in \omega} \overline{B(x,R)}$, has an unbounded connected component, with positive probability. In the notation of Meester and Roy \cite{meester-roy-book}, our problem is a Boolean percolation model $(X,\rho)$ driven by a Gibbsian point process $X$ and with deterministic radius $\rho = R$. 

This problem has been studied before \cite{muermann75,zessin08,pechersky-yambartsev09,aristoff12}, for both repulsive pair potentials and potentials with an attractive part.  M{\"u}rmann \cite{muermann75} investigated finite-range potentials and gave a sufficient condition for the absence of percolation; a different proof, with an extension to tempered boundary conditions for attractive potentials, was given by Zessin \cite{zessin08}. His proof builds on integration by parts for Gibbsian measures. Pechersky and Yambartsev \cite{pechersky-yambartsev09} proved a criterion for absence of percolation with the help of a coupled branching process; their result does not require that the potential has compact support. In addition, Pechersky and Yambartsev gave a sufficient condition for the presence of percolation, valid in dimension $2$, for attractive potentials with possibly unbounded support. An analogous result, for hard spheres in dimension $2$ was shown by Aristoff \cite{aristoff12}. 

The cited works all formulate criteria in terms of the activity $z$  -- i.e., the intensity parameter of some a priori Poisson point process -- rather than the density $\rho$, which for interacting particles is a non-trivial function of $z$. As a consequence, for attractive potentials, the cited results cannot distinguish between two very different physical pictures. First, percolation might be a high-density effect, as expected for hard spheres; second, it might be an energetic effect -- at low temperature, it may happen that the interaction favors the formation of large connected components,  which because of entropy may coexist with large almost empty regions of space containing a few small components. In such a situation the density threshold for percolation may be very small, as suggested by  recent results \cite{jkm11} on large deviations for cluster size distributions in the canonical ensemble.

The aim of the present article is, therefore, to give bounds on percolation and non-percolation thresholds in terms of the density rather than the activity. Our main result is Theorem \ref{thm:perco-can} below, valid for finite-range, attractive potentials. Quickly summarized, Theorem \ref{thm:perco-can} states that there are curves $\rho_\pm(\beta)$ such that if $P_{\beta,\rho}$ is a shift-invariant Gibbs measure at density $\rho$ and inverse temperature $\beta$, the following holds. 
\begin{itemize}
	\item If $\rho<\rho_-(\beta)$, there are  only bounded connected components, $P_{\beta,\rho}$-almost surely; we have $\rho_-(\beta) = \exp( - \beta \nu^*(1+o(1)))$ for suitable $\nu^*>0$. 
	\item If $\rho>\rho_+(\beta)$, there is an unbounded connected component, $P_{\beta,\rho}$-almost surely; as $\beta \to \infty$, $\rho_+(\beta) \to \rho_0>0$ for suitable $\rho_0>0$.  
\end{itemize}
We expect that for very large $\beta$ and $\rho_-(\beta)<\rho <\rho_+(\beta)$, there are non-ergodic Gibbs measures with percolation probability strictly between $0$ and $1$.
We have no proof of this conjecture; see, however, Proposition \ref{prop:soft-trans} for some preliminary evidence, and Appendix \ref{app:lattice} for relevant lattice gas results.

The key technical tool for the proof of Proposition \ref{prop:soft-trans} is a variational formula for a large deviations rate for cluster size distributions in the canonical ensemble (Theorem \ref{thm:equiv}); the large deviations principle was investigated in \cite{jkm11}. The variational formula establishes a relation with the Gibbs variational principle and allows us to apply a form of equivalence of ensemble \cite{georgii95}. This in turn enables us to combine the knowledge for grand-canonical, infinite volume Gibbs measures in \cite{muermann75,pechersky-yambartsev09} with results on the canonical ensemble \cite{jkm11}. 


We conclude the introduction with a word of caution on the physical interpretation of percolation. We should stress that a percolation transition need not be a phase transition -- for the ideal gas at activity $z$ (Poisson point process with intensity $z$), the pressure is an analytic function no matter the value of $z$, even though there is a percolation transition at high enough $z$ \cite{meester-roy-book}. Nevertheless, for attractive pair potentials and at low temperature, the percolation transition might coincide with a phase transition. In fact, for nearest neighbor attractive lattice gases (or Ising model), and temperatures below some threshold $T_+$,  the percolation transition and the phase transition coincide (see the review by Georgii, H{\"a}ggstr{\"o}m and Maes \cite{ghm}). In dimension two, $T_+$ equals the Curie temperature $T_\mathrm{C}$, but in higher dimensions $T_+<T_\mathrm{C}$ \cite{aizenman-bricmont-lebowitz87}, illustrating again that percolation and phase transition in general do not coincide. For the reader's convenience, we summarize some relevant results in Appendix \ref{app:lattice}.

The remainder of the article is organized as follows. In Sections \ref{sec:setting} and  \ref{sec:results} we formulate the setting and our results. Section \ref{sec:topo} defines the topology of local convergence and summarizes continuity properties of important functions such as the relative entropy rate. Sections \ref{sec:var} to \ref{sec:perco} are devoted to the proofs. 

\newpage

\section{Setting} \label{sec:setting} 
  
\subsection{Pair potential}   
  
The pair potential is a function $v:[0,\infty) \to \R \cup \{\infty\}$ that serves to define the total energy of an $N$-particle configuration
\begin{equation*} 
	U_N(x_1,\ldots,x_N):= \sum_{1 \leq i<j\leq N} v(|x_i-x_j|),\quad x_1,\ldots,x_N\in \R^d.
\end{equation*} 
By a slight abuse of notation, we shall drop the subscript and write $U(\vect{x})$ instead of $U_N(\vect{x})$.
  
\begin{assumption} \label{ass:basic}
	The pair potential satisfies the following basic assumptions:
	\begin{itemize} 
	\item Either $v$ is everywhere finite or there is a $r_\mathrm{hc}>0$ such that 
		$v(r)=\infty$ for $r<r_\mathrm{hc}$ and $v(r)<\infty$ for $r>r_\mathrm{hc}$. (We impose no condition on $v(r_\mathrm{hc})$.)
	\item $v$ has compact support: $r_1:= \sup \supp v<\infty$. 
	\item $v$ is bounded from below: $\inf v>-\infty$. 
	\item $v$ has an attractive tail: for suitable $r_0<r_1$ and all $r\in (r_0,r_1)$, $v(r)<0$. 
	\end{itemize}
\end{assumption}
If $r_\mathrm{hc}>0$, we say that $v$ has a \emph{hard core}. If $v$ has compact support, we shall also say that $v$ has \emph{finite range}. 
%
For $\vect{k} \in \Z^d$, let $C(\vect{k})$ be the unit cube $[k_1+1)\times \cdots \times [k_d,k_d+1)$ and 
\begin{equation*}	
	N_{C(\vect{k})} (x_1,\ldots,x_N) := \bigl| \bigl\{ j \in \{1,\ldots,N\} \big|\, x_j \in C(\vect{k}) \bigr\} \bigr|
\end{equation*}  
the number of particles in  $C(\vect{k})$. 

\begin{assumption} \label{ass:superstable}
	The pair potential $v$ is \emph{superstable}: there are constants $a>0$, $b<\infty$ such 		that for all $N\in \N$ and all $\vect{x}=(x_1,\ldots,x_N )\in (\R^d)^N$,  
	\begin{equation} \label{eq:superstable}
	 		U(\vect{x}) \geq \sum_{\vect{k} \in \Z^d} \Bigl( a N_{C(\vect{k})} (\vect{x}) ^2 - b N_{C(\vect{k})}(\vect{x}) \Bigr). 
\end{equation}
\end{assumption}
Every superstable interaction is \emph{stable}, since Eq. \eqref{eq:superstable} implies in particular that $U(x_1,\ldots,x_N) \geq - b N$. If $v$ satisfies Assumption \ref{ass:basic} and is non-integrably divergent at the origin, then $v$ is superstable; see \cite{ruelle70,ruelle-book} for a proof and other sufficient conditions. 

\begin{assumption} \label{ass:integrable} 
	The potential is integrable in $\{v<\infty\}$: 
	 $ \int_{|x|>r_\mathrm{hc}} |v\bigl(|x|\bigr)| \dd x <\infty.$ 
\end{assumption}

\begin{assumption}  \label{ass:interparticle-lower}
	 There is an $r_\mathrm{min}>0$ such that for every $N\in \N$, 
	$U$ has a minimizer $(x_1,\ldots,x_N) \in (\R^d)^N$ with interparticle distance bounded from below by $r_\mathrm{min}$: 
	\begin{equation*} 
		1 \leq i<j \leq N \Rightarrow |x_i -x_j| \geq r_\mathrm{min}. 
	\end{equation*}
	In addition, $v$ is H{\"o}lder-continuous in $[r_\mathrm{min},\infty)$. 
\end{assumption}
A sufficient condition for the lower bound on interparticle distances is that $v(r)/r^d \to \infty$ as $r\to 0$, as can be shown along \cite[Lemma 2.2]{theil06}.

\begin{assumption} \label{ass:interparticle-upper}
	There is a $C>0$ such that for every $N\in\N$, $U$ has a minimizer $(x_1,\ldots,x_N) \in (\R^d)^N$ with diameter bounded by $C N^{1/d}$: 
		\begin{equation*} 
			\max \{ |x_i - x_j| \mid 1\leq i <j \leq N \} \leq C N^{1/d}. 
		\end{equation*} 
\end{assumption}
Under Assumption \ref{ass:basic}, this condition is trivially fulfilled in dimension $1$. In dimension $2$, sufficient conditions are given, for example, in \cite{theil06}, where much more is proven on the ground states. To the best of our knowledge, there is no result in dimension $3$ or higher; in fact, providing upper bounds on interparticle distances for Lennard-Jones type interactions seems to be a non-trivial problem in non-linear optimization, see the article by Blanc \cite{blanc04} and the references therein. 

Let us briefly comment on our conditions on the pair potential. Assumption \ref{ass:basic} simply defines the class of pair potentials we are interested in. Superstability as in Assumption \ref{ass:superstable} is a standard condition that ensures the existence of infinite volume Gibbs measures, see the next subsection. The integrability assumption \ref{ass:integrable} will allow us to use a bound on Mayer expansions going back to Brydes and Federbush \cite{brydges-federbush78}.
Assumptions \ref{ass:interparticle-lower} and \ref{ass:interparticle-upper} are needed for precise statements about Gibbs measures at low temperature $\beta^{-1}$ and densities above some threshold of the form $\exp(- \beta \nu^*)$. 

\subsection{Infinite volume Gibbs measures}
Let $\Omega$ be the set of locally finite point configurations, 
\begin{equation*}
	\Omega:= \{ \omega \subset \R^d \mid \forall r>0:\ |\omega\cap B(0,r)|<\infty \},  
\end{equation*} 
with $B(0,r)$ the open ball of radius $r$ centered at the origin. We equip
$\Omega$  with  the $\sigma$-algebra $\mathcal{F}$  generated by the counting variables $N_B(\omega):=|\omega\cap B|$, $B\subset \R^d$ Borel-measurable, and denote the probability measures on $(\Omega,\mathcal{F})$ with the letter $\mathcal{P}$. 
The following subsets of $\mathcal{P}$ will be relevant for us: shift-invariant measures $\mathcal{P}_\theta$, tempered measures, and infinite-volume Gibbs measures $\mathcal{G}(\beta,\mu)$; we proceed with their definition. 

 For $x\in \R^d$, the shift $\theta_x:\Omega \to \Omega$ is defined by $\theta_x\omega:= \{ y-x \mid y \in \omega \}$. A measure $P\in \mathcal{P}$ is \emph{shift-invariant} if $P(\theta_x(A))=P(A)$ for all measurable $A\subset \Omega$ and all $x\in \R^d$. The collection of shift-invariant measures is denoted $\mathcal{P}_\theta$. 
We say that $P \in \mathcal{P}$ is \emph{tempered} if for $P$-almost all $\omega$, 
\begin{equation} \label{eq:tempered}
	\exists t(\omega)>0\ \forall \ell \in \N: \quad \sum_{\vect{k} \in \Z^d \cap [-\ell,\ell]^d} \Bigl( N_{C(\vect{k})}(\omega)\Bigr) ^2 \leq t(\omega) \ell^d, 
\end{equation} 
where $C(\vect{k})$ is the unit cube $[k_1,k_1+1)\times \cdots \times [k_d,k_d+1)$. 
Fix $\beta>0$ and $\mu \in \R$. Let $\Lambda \subset \R^d$ be a bounded Borel set and 
$\zeta \in \Omega$ a configuration satisfying the temperedness condition \eqref{eq:tempered}. For $n \in \N$, define the measure $Q_n$ on $\Lambda^n$ as the measure with Lebesgue density 
\begin{equation*} 
	 \frac{1}{n!} \exp \Bigl( - \beta \sum_{1\leq i<j \leq n} v(|x_i-x_j|) 
										- \beta \sum_{i=1}^n \sum_{y \in \zeta \cap\Lambda^\comp} 
												v(|x_i - y|) \Bigr).
\end{equation*}
The image of $Q_n$ under $(x_1,\ldots,x_n) \mapsto \{x_1,\ldots,x_n\} $ is a measure on $\Omega$ which we denote again by $Q_n$. Let $Q_0$ be the probability measure on $\Omega$ which gives probability $1$ to the event that $\omega = \emptyset$. Define $P_{\beta,\mu,\Lambda \mid \zeta}$ by
\begin{equation*}
	P_{\beta,\mu,\Lambda \mid \zeta}:= \frac{1}{\Xi_{\Lambda \mid \zeta}(\beta,\mu)} 
			\sum_{n=0}^\infty z^nQ_n, \quad z = \exp(\beta \mu). 
\end{equation*} 
The normalization $\Xi_{\Lambda \mid \zeta}(\beta,\mu)$ is defined by the requirement that $P_{\beta,\mu,\Lambda \mid \zeta}$ is a probability measure on $\Omega$. 
 
 We say that $P\in \mathcal{P}$ is a \emph{$(\beta,\mu)$-Gibbs measure} if and only if $P$ is tempered and for every $\Lambda$ and every measurable $f:\Omega \to [0,\infty)$, 
 \begin{equation} \label{eq:dlr} 
 	\int_\Omega P(\dd \omega)  f(\omega) = \int_\Omega P(\dd \zeta) \int_\Omega P_{\beta,\mu,\Lambda \mid \zeta} (\dd \omega) f\bigl(\omega \cup (\zeta \cap \Lambda^\comp) \bigr). 
 \end{equation}
We denote the set of $(\beta,\mu)$-Gibbs measures as $\mathcal{G}(\beta,\mu)$; the sets $\mathcal{G}(\beta,\mu)$, $(\beta,\mu) \in \R_+\times \R$, are pairwise disjoint \cite[Remark 3.7]{georgii95}. The existence of Gibbs measures for lower regular, superstable potentials -- this class includes our potentials satisfying Assumptions \ref{ass:basic} and \ref{ass:superstable} --  is shown in \cite{ruelle70,dobrushin70}: for all $\beta>0$, $\mu \in \R$, $\mathcal{G}(\beta,\mu)$  and $\mathcal{G}(\beta,\mu)\cap \mathcal{P}_\theta$ are non-empty, convex sets. With suitable topologies, they are Choquet simplices and, in particular, compact  \cite[Theorems 5.6 and 5.8]{ruelle70}.

\subsection{Palm measure; energy and entropy densities}
For each $P\in \mathcal{P}_\theta$, there is a unique finite measure $P^\circ$ on $(\Omega,\mathcal{F})$ such that for all measurable, non-negative functions $f:\R^d\times \Omega \to [0,\infty)$,  
\begin{equation} \label{eq:palm}
	 \int_\Omega \sum_{x \in \omega} f(x,\theta_x \omega) P(\dd \omega) = \int_{\Omega} \int_{\R^d}  f(x,\omega) \dd x P^\circ (\dd \omega). 
\end{equation}
$P^\circ$ is the \emph{Palm measure} of $P$ \cite[Chapter 13]{daley-vere-jones-vol2}. 
It is known that $P^\circ(0\notin \omega)=0$ and $P^\circ(\Omega)=\rho(P)$, where $\rho(P)$ is the expected number of particles in $[0,1)^d$, called \emph{density}.
The defining equation \eqref{eq:palm}, specialized to $f(x,\omega)= \mathbf{1}_C(x) g(\omega)$ with $C\subset \R^d$ Borel-measurable, yields the useful identity
\begin{equation} \label{eq:gpalm}
	\int_\Omega g(\omega) P^\circ(\dd \omega) = \frac{1}{|C|} \int_\Omega \sum_{x\in \omega \cap C} g(\theta_x \omega) P(\dd \omega). 
\end{equation} 
$|C|$ is the Lebesgue volume of $C$.

\begin{remark} The Palm measure is the continuum analogue of a simple lattice object: if $\Omega=\{ \omega \mid \omega \subset \Z^d\} \equiv \{0,1\}^{\Z^d}$ and $P$ is a shift-invariant measure on $\Omega$, the Palm measure becomes 
$P^\circ(A) = P(A \cap \{0 \in \omega\})$. 
\end{remark} 

For a shift-invariant, tempered measure $P$, we define  the expected energy per unit volume, or \emph{energy density}, as
\begin{equation*} 
	U(P):= \frac{1}{2}\int_\Omega \Bigl( \sum_{y\in \omega} v(|y|)\Bigr) P^\circ (\dd \omega) 
		=  \frac{1}{2} \int_\Omega \Bigl( \sum_{x\in \omega\cap [0,1]^d} \sum_{y \in \omega} v(|y-x|)\Bigr) P(\dd \omega).
\end{equation*}
$U(P)$ takes values in $\R\cup \{\infty\}$. For superstable potentials as in Eq.  \eqref{eq:superstable}, the energy is bounded from below: $U(P) \geq - b^2/4a$. 

The entropy per unit volume, or \emph{entropy density} is defined as a relative entropy rate. Let $Q \in \mathcal{P}_\theta$ be the Poisson point process with intensity $1$ and $P\in \mathcal{P}_\theta$. For $\Lambda =[-L,L]^d\subset \R^d$, let $P_\Lambda$ be the image of $P$ under the projection map $\omega \mapsto \omega \cap \Lambda$. Define $Q_\Lambda$ in a similar way. The relative entropy is 
\begin{equation*} 
	I(P_\Lambda;Q_\Lambda) := \int_\Omega f(\omega) \log f(\omega) Q_\Lambda(\dd \omega) = \int_\Omega \log f(\omega) P_\Lambda(\dd \omega)
\end{equation*} 
if $P_\Lambda$ has Radon-Nikodym derivative $\dd P_\Lambda / \dd Q_\Lambda=f$, and $I(P_\Lambda;Q_\Lambda):= \infty$ if $P_\Lambda$ s not absolutely continuous with respect to  $Q_\Lambda$.
The limit 
\begin{equation}  \label{eq:entropy}
	S(P):= 1 -\lim_{|\Lambda| \to \infty} \frac{1}{|\Lambda|} I(P_\Lambda;Q_\Lambda) \in \R \cup \{-\infty\}
\end{equation}
exists for all $P\in \mathcal{P}_\theta$, see \cite{georgii-zessin93}. Continuity properties of $U(P)$ and $S(P)$ with respect to a suitable topology on $\mathcal{P}$ are recalled in Section \ref{sec:topo} below.

\begin{remark}
	The additive constant $1$ is included in Eq. \ref{eq:entropy} for aesthetic reasons; if we did not include it, we would need an additive constant in Eq. \eqref{eq:var}. 
\end{remark}

\begin{example} Let $P$ be a Poisson point process with intensity parameter $z$. Then
\begin{align*} 
	I(P_\Lambda;Q_\Lambda) & = \sum_{n=0}^\infty \Bigl(
		 \log \frac{ (z|\Lambda|)^n\exp(- z|\Lambda|)}{|\Lambda|^n \exp(-|\Lambda|)} \Bigr) \frac{(z|\Lambda|)^n}{n!} e^{-z|\Lambda|} \\
		& = (1-z) |\Lambda| + z|\Lambda| \log z 
\end{align*}
thus $S(P) = - z(\log z -1)$.  
\end{example}

\subsection{Cluster densities} 
Let $r_1>0$ be the range of the potential as in Assumption \ref{ass:basic}. Fix $R\geq r_1$. 
For $\omega \in \Omega$, let $G_\omega$ be the graph with vertex set $\omega$ and edge set $\{ \{x,y\} \mid x,y\in \omega,\ 0<|x-y|\leq R\}$. When $x\in \omega$, let  $\mathcal{C}_\omega(x)\subset \omega$ be the connected component of $x$ in $G_\omega$. 
When $x\in \R^d \backslash \omega$, we set $\mathcal{C}_\omega(x):=\emptyset$ and $|\mathcal{C}_\omega(x)|:=0$. For $P\in \mathcal{P}_\theta$ and $k\in \N$,  the \emph{expected number of $k$-clusters per unit volume} or \emph{$k$-cluster density} is \begin{equation}\label{eq:rhokp} 
	\rho_k(P):= k^{-1}P^\circ\bigl( |\mathcal{C}_\omega(0)|=k\bigr) 
		= \frac{1}{|C|} \int_\Omega \sum_{x\in \omega \cap C} \mathbf{1}\bigl(  |\mathcal{C}_\omega(x)|=k\bigr) P(\dd \omega). 
\end{equation}
The last identity holds for every Borel set $C$, compare Eq. \eqref{eq:gpalm}. 
Note that for every $P\in \mathcal{P}_\theta$, $\sum_{k=1}^\infty k \rho_k(P)\leq \rho(P)$.
For later purpose we also define finite volume empirical densities as  
\begin{equation*} 
	\rho_{k,\Lambda}(\omega):= \frac{1}{k|\Lambda|} \sum_{x\in \omega} \mathbf{1}\bigl( |\mathcal{C}_{\omega \cap \Lambda}(x)| =k\bigr).
\end{equation*}
Thus $\rho_{k,\Lambda}(\omega)$ is the number of $k$-clusters in $\omega \cap \Lambda$, divided by the volume $|\Lambda|$. We have, for all $\omega \in \Omega$, 
$\sum_{k=1}^{N_\Lambda(\omega)} k\rho_{k,\Lambda}(\omega) = N_\Lambda(\omega)/|\Lambda|$.

\subsection{Large deviations for cluster size distributions} 
Finally we recall a large deviation principle shown in \cite{jkm11}. Fix $R>r_1$. 
For $\Lambda=[0,L]^d$ and $N\in \N$, let 
\begin{equation*} 
	Z_\Lambda(\beta,N):=\frac{1}{N!} \int_{\Lambda^N} e^{-\beta U(x_1,\ldots,x_N)} \dd x_1\cdots \dd x_N.
\end{equation*} 
be the canonical partition function and 
$\P_{\beta,N,\Lambda}$  the probability measure on $\Lambda^N$ 
with density $N!^{-1} Z_\Lambda(\beta,N)^{-1} \exp(- \beta U(x_1,\ldots, x_N))$. 
The image of $\P_{\beta,N,\Lambda}$ under the mapping $\R^N\to \Omega$, $(x_1,\ldots,x_N) \mapsto \{x_1,\ldots,x_N\}$ is a measure on $(\Omega,\mathcal{F})$, for which by a slight abuse of notation we use the same letter $\P_{\beta,\Lambda,N}$. 

Equip $\R_+^\N$ with the product topology and the associated Borel $\sigma$-algebra, and let $\vect{\rho}_\Lambda: (\Omega,\mathcal{F},\P_{\beta,N,\Lambda}) \to \R_+^\N$ be the random variable $\vect{\rho}_\Lambda(\omega):= (\rho_{k,\Lambda}(\omega))_{k\in \N}$. 
We are interested in the behavior of $\vect{\rho}_\Lambda$ in the \emph{thermodynamic limit } 
\begin{equation} \label{eq:thermolim} 
	N\to \infty, \quad L\to \infty,\quad \frac{N}{|\Lambda|}\to \rho 
\end{equation}
for $\rho>0$. First we recall the definition of the free energy $f(\beta,\rho)$ and the close-packing density $\rho_\mathrm{cp}$.  Consider the limit 
\begin{equation*} 
	f(\beta,\rho):=- \lim \frac{1}{\beta |\Lambda|} \log Z_\Lambda(\beta,N)
\end{equation*} 
along \eqref{eq:thermolim}. 
It is well-known \cite{ruelle-book} that the limit exists and is finite when the density $\rho$ is strictly smaller than the \emph{close-packing density} $\rho_\mathrm{cp}>0$, and is infinite when $\rho>\rho_\mathrm{cp}$. When the potential has no hard core ($r_\mathrm{hc}=0$), we have $\rho_\mathrm{cp}=\infty$ and the limit $f(\beta,\rho)$ is finite for all $\rho>0$. 
 
The following holds \cite{jkm11}:  for all $\beta>0$ and $\rho\in (0,\rho_\mathrm{cp})$, in the limit \eqref{eq:thermolim}, the random variable  $\vect{\rho}_\Lambda(\omega)$  satisfies a large deviation principle with speed $\beta |\Lambda|$. The rate function is of the form 
$f(\beta,\rho,(\rho_k)_{k\in \N}) - f(\beta,\rho)$ for a  function $f(\beta,\rho,\cdot): [0,\infty)^\N\to \R\cup \{\infty\}$ that is convex and lower semi-continuous with compact sublevel sets.
Moreover,
\begin{equation*} 
	f(\beta,\rho) = \min\Bigl\lbrace f(\beta,\rho,(\rho_k)_{k\in\N}\bigr) \,\big|\, (\rho_k)_{k\in \N} \in [0,\infty)^\N,\ \sum_{k=1}^\infty  k \rho_k \leq \rho \Bigr\rbrace, 
\end{equation*} 
%
Note that the rate function $f(\beta,\rho,\cdot)$ (but not the free energy $f(\beta,\rho)$!) depends on the connectivity radius  $R\geq r_1$; to lighten notation, however, we leave the $R$-dependence implicit.

\section{Results} \label{sec:results}

Here we formulate our main results. Section \ref{sec:var-char} provides a variational characterization of percolation. Sections \ref{sec:perco-gc} and \ref{sec:perco-can} formulate bounds on percolation thresholds in terms of the chemical potential $\mu = \beta^{-1} \log z$ and the density $\rho$. Section \ref{sec:var-char} on the one hand and Sections \ref{sec:perco-gc} and \ref{sec:perco-can} on the other hand are logically independent, except for Proposition \ref{prop:soft-trans}. 

Throughout the remainder of the article we shall assume without further mention that the pair potential satisfies Assumptions \ref{ass:basic} and \ref{ass:superstable}, and the connectivity radius $R$ is larger or equal to the potential range $r_1$. 

\subsection{Variational characterization of percolation}  \label{sec:var-char}
Our first result expresses the rate function $f(\beta,\rho,(\rho_k))$ of \cite{jkm11} in terms of a variational problem. 

\begin{theorem} 
 \label{thm:var}
 	For every $\rho \in (0,\rho_\mathrm{cp})$, $\beta >0$ and $(\rho_k)_{k\in \N} \in \R_+^\N$, 
	\begin{multline} \label{eq:var}
		f\bigl(\beta,\rho, (\rho_k)_{k\in \N}\bigr)\\
				 = \min\bigl \lbrace U(P) -  \beta^{-1} S(P) \mid 
		P\in \mathcal{P}_\theta,\ \rho(P) = \rho,\ 
			\forall k \in \N:\  \rho_k(P) = \rho_k \bigr \rbrace
	\end{multline}
	with the convention $\min \emptyset =\infty$. 
\end{theorem}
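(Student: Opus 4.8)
The plan is to derive \eqref{eq:var} by combining two large deviation principles (LDPs) for the canonical ensemble $\P_{\beta,N,\Lambda}$ in the thermodynamic limit \eqref{eq:thermolim}: the LDP for the cluster size vector $\vect{\rho}_\Lambda$ recalled from \cite{jkm11}, of speed $\beta|\Lambda|$ and rate function $f(\beta,\rho,\cdot)-f(\beta,\rho)$; and the ``level-3'' LDP for the stationary empirical field, whose rate function is a normalization of the free energy functional $U(P)-\beta^{-1}S(P)$. The latter is the large deviation form of the Gibbs variational principle for the canonical ensemble, as in Georgii \cite{georgii95} (see also \cite{georgii-zessin93}); granting it, formula \eqref{eq:var} falls out of the contraction principle together with uniqueness of rate functions.

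In detail: for $\omega\in\Omega$ and $\Lambda=[0,L]^d$, let $R_\Lambda(\omega)\in\mathcal{P}_\theta$ be the stationary empirical field obtained by periodizing $\omega\cap\Lambda$ (not to be confused with the connectivity radius $R$). Superstability together with $\rho<\rho_{\mathrm{cp}}$ confines $R_\Lambda$ under $\P_{\beta,N,\Lambda}$ to a compact subset $K\subset\mathcal{P}_\theta$ of tempered fields of density exactly $N/|\Lambda|\to\rho$, in the topology of local convergence of Section \ref{sec:topo}; on $K$ the maps $\rho(\cdot)$ and $\rho_k(\cdot)$, $k\in\N$, are continuous, the point for $\rho_k$ being that $\{|\mathcal{C}_\omega(0)|=k\}$ agrees, up to a $P$-null set of boundary configurations, with a local event supported in $B(0,(k+1)R)$. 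The empirical field then satisfies, at speed $|\Lambda|$, an LDP with rate function
\begin{equation*}
	\tilde J(P) = \begin{cases} \beta\bigl(U(P)-\beta^{-1}S(P)-f(\beta,\rho)\bigr), & \rho(P)=\rho,\\ +\infty, & \text{otherwise},\end{cases}
\end{equation*}
which is nothing but the statement $Z_\Lambda(\beta,N)=\exp\bigl(-\beta|\Lambda|f(\beta,\rho)(1+o(1))\bigr)$ combined with the Ruelle--Georgii estimate on the Lebesgue measure of configurations with a prescribed empirical field. Since $\vect{\rho}_\Lambda(\omega)$ and $(\rho_k(R_\Lambda(\omega)))_{k\in\N}$ differ only through interactions of $\omega$ across $\partial\Lambda$, hence in $O(L^{d-1})$ clusters, they are exponentially equivalent at speed $|\Lambda|=L^d$. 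By the contraction principle applied to the continuous map $K\to\R_+^\N$, $P\mapsto(\rho_k(P))_{k\in\N}$, the vector $\vect{\rho}_\Lambda$ therefore satisfies at speed $|\Lambda|$ an LDP whose rate function at $\vect{\rho}$ is $\inf\{\beta(U(P)-\beta^{-1}S(P)-f(\beta,\rho))\mid \rho(P)=\rho,\ \rho_k(P)=\rho_k\ \forall k\in\N\}$. Comparing this with the rate function $\beta(f(\beta,\rho,\vect{\rho})-f(\beta,\rho))$ from \cite{jkm11} (the factor $\beta$ accounting for the faster speed there), dividing by $\beta$ and adding $f(\beta,\rho)$ yields \eqref{eq:var} with ``$\inf$'' in place of ``$\min$''. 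Finally $U(\cdot)-\beta^{-1}S(\cdot)$ is lower semicontinuous with compact sublevel sets (Section \ref{sec:topo}; superstability gives $U\ge -b^2/(4a)$ and the entropy is bounded above), and $\{P\mid\rho(P)=\rho,\ \rho_k(P)=\rho_k\ \forall k\in\N\}$ is closed, so the infimum is attained whenever finite; when the constraint set is empty both sides equal $\infty$, matching $\min\emptyset=\infty$.

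The delicate point will be the honest proof of the level-3 LDP with precisely the rate function $\tilde J$ --- in particular its lower bound, i.e.\ that configurations whose empirical field (equivalently, whose energy density and relative entropy rate) is close to a given $P$ carry enough Lebesgue mass --- and the verification that the constraints $\rho(P)=\rho$ and $\rho_k(P)=\rho_k$ survive passage to the limit. The latter is the ``escaping mass'' issue: $\rho(\cdot)$ is only lower semicontinuous on all of $\mathcal{P}_\theta$, so one genuinely needs the superstable a priori bounds to restrict to the compact set $K$ on which $\rho(\cdot)$ and the $\rho_k(\cdot)$ are continuous; this is where the continuity theory of Section \ref{sec:topo} and Assumption \ref{ass:superstable} do the real work. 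If one prefers to bypass the level-3 LDP, the identity can instead be obtained as two inequalities directly from the \cite{jkm11} LDP: ``$\ge$'' by extracting from near-optimal configurations for $\{\vect{\rho}_\Lambda\approx\vect{\rho}\}$ a subsequential limit of the empirical fields and invoking lower semicontinuity of $U-\beta^{-1}S$ and continuity of the constraints; ``$\le$'' by using a minimizing $P$ of the right-hand side to build finite-volume configurations realizing $\vect{\rho}_\Lambda\approx\vect{\rho}$ with exponential weight $\exp(|\Lambda|(S(P)-\beta U(P)))$, again compared against $Z_\Lambda(\beta,N)\approx\exp(-\beta|\Lambda|f(\beta,\rho))$.
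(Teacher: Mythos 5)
Your plan pivots on a \emph{canonical} level-3 large deviation principle for the stationary empirical field $R_\Lambda$, with rate function $\tilde J(P)=\beta(U(P)-\beta^{-1}S(P)-f(\beta,\rho))$ on $\{\rho(P)=\rho\}$. This is precisely the object whose unavailability the paper identifies as the first obstacle: the level-3 LDP of Georgii \cite{georgii94} and Georgii--Zessin \cite{georgii-zessin93} is proved in the \emph{grand-canonical} ensemble, not the canonical one, and \cite{georgii95} supplies the Gibbs variational principle (a characterization of minimizers), not a canonical LDP. You acknowledge that ``the honest proof of the level-3 LDP with precisely the rate function $\tilde J$'' is the delicate point, but you never close it, so as written the argument has a genuine gap rather than a mere technicality. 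Your fallback (the ``two inequalities directly from the \cite{jkm11} LDP'') is in fact just a disguised restatement of the two halves of the same unproven canonical level-3 LDP, so it does not circumvent the issue.

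The paper's route is designed exactly to avoid this: it first transfers the \cite{jkm11} canonical LDP for $\vect{\rho}_\Lambda$ to the grand-canonical ensemble (Lemma \ref{lem:gc1}), where the level-3 LDP \emph{is} available; applies the contraction principle there to the periodized cluster densities $\vect{\rho}_\Lambda^\per$ (Lemma \ref{lem:contraction}); shows $\vect{\rho}_\Lambda$ and $\vect{\rho}_\Lambda^\per$ are exponentially equivalent using Ruelle's superstability correlation bounds, so the two grand-canonical rate functions coincide (Lemma \ref{lem:bc}); and then recovers the canonical identity by Legendre inversion, using convexity and lower semicontinuity of both sides in $\rho$ (Lemma \ref{lem:legendre}). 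Two further points where your sketch is thinner than it can afford to be: (i) the exponential-equivalence step is not just ``$O(L^{d-1})$ clusters at the boundary''---one must rule out pile-ups of particles near $\partial\Lambda$, and the paper does this with the superstability-derived bound on correlation functions (which is itself a grand-canonical statement, another reason the detour is convenient); (ii) in the topology of Section \ref{sec:topo} the density $\rho(\cdot)$ is actually continuous, not merely lower semicontinuous, so the ``escaping mass'' worry you raise is not quite the right one---the real compactness work is done by the superlevel sets of $S$. If you want to salvage your direct approach, you would need to supply a full proof of the canonical level-3 LDP (including the density constraint); otherwise, the Legendre-duality route is the one to follow.
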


We recall Theorem 3.4  from~\cite{georgii95}. Fix $\beta>0$ and  $\mu \in \R$. Then $P \in \mathcal{P}_\theta$ is a $(\beta,\mu)$-Gibbs measure if and only if 
it minimizes $U(P) - \beta^{-1} S(P) - \mu \rho(P)$; the minimum $-p(\beta,\mu)$ is minus the  \emph{pressure}. This is the  \emph{Gibbs variational principle}. Together with Theorem \ref{thm:var}, it allows us to establish the following relation between minimizers of $f(\beta,\rho,\cdot)$ and infinite volume Gibbs measures.\footnote{Georgii states his result under the additional assumption that the potential is non-integrably divergent at the origin. A close inspection of the proof shows, however, that we can dispense with this condition because our potentials have finite range; see the comment in \cite{georgii95} before Lemma 7.3.}

\begin{theorem}\label{thm:equiv}
	Fix $\rho \in (0,\rho_\mathrm{cp})$ and $\beta>0$. Then, for every $(\rho_k)_{k\in \N} \in \R_+^\N$,  the following two statements are equivalent: 
	\begin{enumerate}
		\item $(\rho_k)_{k\in \N}$ is a minimizer of $f(\beta,\rho, \cdot)$. 
		\item There is a chemical potential $\mu \in \R$ and a shift-invariant Gibbs measure $P\in \mathcal{P}_\theta \cap \mathcal{G}(\beta,\mu)$ such that $\rho(P)=\rho$ and for all $k \in \N$, $\rho_k(P) = \rho_k$. 
	\end{enumerate}	
\end{theorem}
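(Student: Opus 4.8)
The plan is to feed the variational formula of Theorem \ref{thm:var} into the Gibbs variational principle (Theorem 3.4 of \cite{georgii95}) and exploit convexity in the density variable. Write $g(\rho) := f(\beta,\rho)$ and recall from \cite{jkm11} that $f(\beta,\rho)= \min\{ f(\beta,\rho,(\rho_k)) : \sum_k k\rho_k \le \rho\}$, and that $p(\beta,\mu) = -\min_{P\in\mathcal{P}_\theta}\bigl(U(P)-\beta^{-1}S(P)-\mu\rho(P)\bigr)$. The first observation I would record is that the free energy $\rho\mapsto f(\beta,\rho)$ is convex, so for each $\rho\in(0,\rho_\mathrm{cp})$ there is a supporting line: a $\mu=\mu(\rho)\in\R$ with $f(\beta,\rho')\ge f(\beta,\rho)+\mu(\rho-\rho')$ for all $\rho'$ — equivalently $\mu\in\partial f(\beta,\rho)$ in the subdifferential sense. (At the possible non-differentiability points one picks any element of the subdifferential; convexity and finiteness on $(0,\rho_\mathrm{cp})$ guarantee it is nonempty.) This $\mu$ is the chemical potential that will appear in statement (2).

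For the implication (1) $\Rightarrow$ (2): suppose $(\rho_k)$ minimizes $f(\beta,\rho,\cdot)$. By Theorem \ref{thm:var} the minimum in \eqref{eq:var} is attained (compact sublevel sets / lower semicontinuity of $U-\beta^{-1}S$ under the local-convergence topology recalled in Section \ref{sec:topo}, together with the closedness of the constraint sets $\{\rho(P)=\rho\}$, $\{\rho_k(P)=\rho_k\}$), so there is $P\in\mathcal{P}_\theta$ with $\rho(P)=\rho$, $\rho_k(P)=\rho_k$ for all $k$, and $U(P)-\beta^{-1}S(P)=f(\beta,\rho,(\rho_k))=f(\beta,\rho)$. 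I then claim $P\in\mathcal{G}(\beta,\mu)$ with $\mu=\mu(\rho)$ chosen as above. Indeed, for any $P'\in\mathcal{P}_\theta$, applying Theorem \ref{thm:var} (and the min-over-$(\rho_k)$ identity) to $\rho':=\rho(P')$ gives $U(P')-\beta^{-1}S(P')\ge f(\beta,\rho')\ge f(\beta,\rho)+\mu(\rho-\rho')=\bigl(U(P)-\beta^{-1}S(P)\bigr)+\mu\rho(P)-\mu\rho(P')$, i.e. $U(P')-\beta^{-1}S(P')-\mu\rho(P')\ge U(P)-\beta^{-1}S(P)-\mu\rho(P)$. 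Hence $P$ minimizes the grand-canonical functional, so by Georgii's Theorem 3.4, $P\in\mathcal{P}_\theta\cap\mathcal{G}(\beta,\mu)$, which is statement (2).

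For the converse (2) $\Rightarrow$ (1): suppose $P\in\mathcal{P}_\theta\cap\mathcal{G}(\beta,\mu)$ with $\rho(P)=\rho$, $\rho_k(P)=\rho_k$. By Theorem \ref{thm:var} applied with these data, $f(\beta,\rho,(\rho_k))\le U(P)-\beta^{-1}S(P)$. On the other hand, by the Gibbs variational principle $U(P)-\beta^{-1}S(P)-\mu\rho(P)=-p(\beta,\mu)\le U(P')-\beta^{-1}S(P')-\mu\rho(P')$ for every $P'\in\mathcal{P}_\theta$; optimizing the right-hand side over $P'$ with $\rho(P')=\rho$ and $\rho_k(P')=\rho_k$ and using Theorem \ref{thm:var} once more yields $U(P)-\beta^{-1}S(P)-\mu\rho \le f(\beta,\rho,(\rho_k))-\mu\rho$, hence $U(P)-\beta^{-1}S(P)\le f(\beta,\rho,(\rho_k))$, so equality holds: $f(\beta,\rho,(\rho_k))=U(P)-\beta^{-1}S(P)$. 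It remains to see this common value equals $\min_{(\rho_k')}f(\beta,\rho,(\rho_k'))=f(\beta,\rho)$. This follows by the same Legendre-type computation: for any competitor $(\rho_k')$ with $\sum k\rho_k'\le\rho$, pick (by Theorem \ref{thm:var}) a near-minimizing $P'$ with $\rho(P')=\rho$, $\rho_k(P')=\rho_k'$; the Gibbs variational inequality at the same $\mu$ gives $U(P')-\beta^{-1}S(P')-\mu\rho\ge U(P)-\beta^{-1}S(P)-\mu\rho$, i.e. $f(\beta,\rho,(\rho_k'))\ge f(\beta,\rho,(\rho_k))$; thus $(\rho_k)$ is a global minimizer, which is statement (1).

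The main obstacle I anticipate is the bookkeeping around non-smoothness and attainment: one must know that the min in \eqref{eq:var} is genuinely attained (so that statement (2) produces an honest Gibbs measure rather than a minimizing sequence), that $\partial f(\beta,\rho)\neq\emptyset$ on the open interval $(0,\rho_\mathrm{cp})$, and that the chemical potential $\mu$ from the supporting line of $\rho\mapsto f(\beta,\rho)$ is compatible with the $\mu$ delivered by Georgii's principle for the measure at hand (a priori statement (2) only asserts existence of \emph{some} $\mu$; one checks the two coincide because $\mu\rho(P')$ is the only term linear in $\rho$ and the functionals agree). The compactness and semicontinuity inputs are exactly those recalled in Section \ref{sec:topo}; the subdifferential nonemptiness is standard convex analysis once finiteness of $f(\beta,\cdot)$ on $(0,\rho_\mathrm{cp})$ is invoked.
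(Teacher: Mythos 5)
Your argument is correct and follows essentially the same route as the paper: invoke Theorem \ref{thm:var} to realize $f(\beta,\rho,(\rho_k))$ by a measure $P$ with the right marginals, then match the chemical potential via the Legendre duality between $f(\beta,\cdot)$ and $p(\beta,\cdot)$ and close the loop with Georgii's variational principle. (Your use of a subgradient $\mu\in\partial f(\beta,\rho)$ and the paper's use of a maximizer of $\mu\mapsto\mu\rho-p(\beta,\mu)$ are the same thing, packaged differently.) One small slip to fix: the supporting-line inequality for a convex function should read $f(\beta,\rho')\ge f(\beta,\rho)+\mu(\rho'-\rho)$, not $+\mu(\rho-\rho')$; with the sign as you wrote it the displayed chain does not actually yield $U(P')-\beta^{-1}S(P')-\mu\rho(P')\ge U(P)-\beta^{-1}S(P)-\mu\rho(P)$, whereas with the corrected sign it does, and your final conclusion shows you intended the corrected version. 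Also, in the last step of (2)$\Rightarrow$(1) you can and should take $P'$ to be an actual minimizer rather than a ``near-minimizing'' measure, since attainment of the minimum in \eqref{eq:var} was already established (Lemma \ref{lem:varmin}).
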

The next elementary proposition establishes a relation between cluster densities and percolation, valid for general shift-invariant point processes. 
\begin{prop} \label{prop:perco}
	Let $P \in \mathcal{P}_\theta$. The following statements are equivalent: 
	\begin{enumerate}
		\item $\sum_{k=1}^\infty k \rho_k(P) <\rho(P)$. 
		\item $P(\text{there is a cluster with infinitely many particles})>0$. 
		\item $P(\text{there is a cluster with infinite diameter}) >0$. 
	\end{enumerate}
\end{prop}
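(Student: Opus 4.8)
The plan is to establish the chain of implications $(1)\Rightarrow(2)\Rightarrow(3)\Rightarrow(1)$, using the Palm-measure identity \eqref{eq:gpalm}/\eqref{eq:rhokp} to translate between the global density $\rho(P)$, the cluster densities $\rho_k(P)$, and the geometry of clusters. The starting point is the identity $\sum_{k=1}^\infty k\rho_k(P) = P^\circ(|\mathcal{C}_\omega(0)|<\infty)$, which follows directly from \eqref{eq:rhokp} since $\sum_{k\geq 1} \mathbf{1}(|\mathcal{C}_\omega(0)|=k)$ equals $\mathbf{1}(|\mathcal{C}_\omega(0)|<\infty)$ and $\rho(P)=P^\circ(\Omega)=P^\circ(0\in\omega)$. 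Hence statement (1) is equivalent to $P^\circ(0\in\omega,\ |\mathcal{C}_\omega(0)|=\infty)>0$, i.e.\ with positive $P^\circ$-probability the origin lies in a cluster with infinitely many particles. Unwinding the Palm measure via \eqref{eq:gpalm} with $C=[0,1)^d$ and $g(\omega)=\mathbf{1}(0\in\omega,\ |\mathcal{C}_\omega(0)|=\infty)$, this says exactly that with positive $P$-probability there is a particle in $[0,1)^d$ belonging to an infinite cluster, which is (2). This gives $(1)\Leftrightarrow(2)$ in one stroke; the only care needed is that the indicator $\mathbf{1}(|\mathcal{C}_\omega(x)|=\infty)$ is $\mathcal{F}$-measurable, which holds because $\{|\mathcal{C}_\omega(x)|\geq m\}$ is determined by finitely many counting variables in growing balls.

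For $(2)\Rightarrow(3)$ I would argue by contraposition using local finiteness: if $P$-almost surely every cluster has finite diameter, then in particular every cluster is contained in a bounded ball, and since $\omega$ is locally finite, a cluster of finite diameter has only finitely many particles; hence almost surely there is no cluster with infinitely many particles, contradicting (2). The implication $(3)\Rightarrow(2)$ (equivalently $(2)\Rightarrow(3)$ together with the trivial $(2)\Leftarrow(3)$-type reasoning) requires a short argument in the other direction: an infinite-diameter cluster must contain infinitely many particles, because a cluster with $k$ particles, all pairwise connected through edges of length at most $R$, has diameter at most $(k-1)R$. So finite particle count forces finite diameter, and infinite diameter forces infinite particle count; combined with the local-finiteness remark, $(2)$ and $(3)$ describe the same event up to a $P$-null set, and in particular have the same (positive/zero) probability. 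Strictly, I would phrase this as: $\{\exists$ cluster with $\infty$ diameter$\}\subseteq\{\exists$ cluster with $\infty$ many particles$\}$ always, and the reverse inclusion holds modulo the null event that some bounded region contains infinitely many points, which has $P$-probability zero since $P\in\mathcal{P}$ is supported on $\Omega$.

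The only genuinely delicate point, and the one I expect to be the main obstacle, is the measurability and $0$--$1$ bookkeeping: ensuring that the events in (2) and (3) lie in $\mathcal{F}$, that the Palm-measure manipulations are justified (one must know $P^\circ$ is a genuine finite measure and that \eqref{eq:gpalm} applies to the relevant non-negative $g$), and that the passage between "there exists a particle in $[0,1)^d$ in an infinite cluster" and "there exists an infinite cluster somewhere" uses shift-invariance correctly — here one covers $\R^d$ by unit cubes, uses $\sigma$-subadditivity, and invokes $P\in\mathcal{P}_\theta$ to see that the probability of an infinite cluster meeting a given cube is independent of the cube. None of these steps is hard individually, but assembling them cleanly is where the real content of this "elementary" proposition lies; the combinatorial facts (diameter $\leq (k-1)R$, local finiteness) are immediate.
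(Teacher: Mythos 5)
Your proposal is correct and follows essentially the same route as the paper: the Palm-measure identity \eqref{eq:gpalm} applied to $g(\omega)=\mathbf{1}(|\mathcal{C}_\omega(0)|=\infty)$ yields that $\rho(P)-\sum_k k\rho_k(P)$ equals the expected number of particles in $[0,1)^d$ lying in an infinite cluster (this is precisely the paper's Eq.~\eqref{eq:deficit-perco}), from which $(1)\Leftrightarrow(2)$ follows via shift-invariance over the unit cubes, and $(2)\Leftrightarrow(3)$ is the deterministic equivalence on $\Omega$ between ``infinitely many particles'' and ``infinite diameter'' coming from local finiteness and the bound $\mathrm{diam}\leq (k-1)R$ for a $k$-cluster. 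The one cosmetic point: since $\Omega$ consists by definition only of locally finite configurations, the events in (2) and (3) are literally equal as subsets of $\Omega$, so no ``modulo a null set'' qualifier is needed in that step.
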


A quantitative relation is given in Eq. \eqref{eq:deficit-perco} below. 
We shall refer to both (2) and (3) as $P(\text{there is an infinite cluster})>0$, or more briefly as percolation.  
An immediate consequence of Theorem \ref{thm:equiv} and Prop. \ref{prop:perco}  is the following characterization of percolation in shift-invariant Gibbs measures. Set
\begin{equation*}
	\mathcal{G}_\theta(\beta,\rho):=  \Bigl\{ P\in\bigcup_{\mu \in \R} \mathcal{G}(\beta,\mu) \mid P \in \mathcal{P}_\theta,\ \rho(P)=\rho  \Bigr\}.
\end{equation*}
The Gibbs variational principle implies that  $\mathcal{G}_\theta(\beta,\rho)$ 
consists of the minimizers of $U(P)-\beta^{-1} S(P)$ under the constraint $\rho(P)=\rho$; compare with the proof of Theorem \ref{thm:equiv} below.

\begin{cor} \label{cor:char} 
	Let $\beta >0$ and $\rho \in (0,\rho_\mathrm{cp})$. Consider the following statements:
		\begin{enumerate}
			\item There is a $P \in  \mathcal{G}_\theta(\beta,\rho)$ such that $P(\text{there is an infinite cluster}) >0$. 
			\item $f(\beta,\rho,\cdot)$ has a  minimizer $(\rho_k)$ such that $\sum_{k=1}^\infty k \rho_k =\rho$. 
			\item For every $P \in  \mathcal{G}_\theta(\beta,\rho)$, $P(\text{there is an infinite cluster}) =1$. 
			\item Every minimizer $(\rho_k)$ of $f(\beta,\rho,\cdot)$ satisfies $\sum_{k=1}^\infty k \rho_k <\rho$. 
		\end{enumerate}
		We have (1)$\Leftrightarrow$(2) and (3)$\Leftrightarrow$(4).  
\end{cor}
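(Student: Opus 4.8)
The plan is to deduce the corollary by combining Theorem \ref{thm:equiv} with Proposition \ref{prop:perco}, so that the only real work is to translate the two statements ``there exists a minimizer with $\sum k\rho_k = \rho$'' and ``every minimizer has $\sum k\rho_k<\rho$'' through the equivalence of minimizers of $f(\beta,\rho,\cdot)$ and measures in $\mathcal{G}_\theta(\beta,\rho)$. I would first record the dictionary: by Theorem \ref{thm:equiv}, a sequence $(\rho_k)$ minimizes $f(\beta,\rho,\cdot)$ if and only if there is a shift-invariant Gibbs measure $P \in \mathcal{G}_\theta(\beta,\rho)$ with $\rho_k(P)=\rho_k$ for all $k$; and by Proposition \ref{prop:perco}, such a $P$ percolates (has an infinite cluster with positive probability) if and only if $\sum_{k=1}^\infty k \rho_k(P) < \rho(P) = \rho$.

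For the implication (1)$\Rightarrow$(2): given $P\in\mathcal{G}_\theta(\beta,\rho)$ with $P(\text{infinite cluster})>0$, set $\rho_k:=\rho_k(P)$. By Theorem \ref{thm:equiv} (direction (2)$\Rightarrow$(1)), $(\rho_k)$ is a minimizer of $f(\beta,\rho,\cdot)$, and by Proposition \ref{prop:perco} (direction (2)$\Rightarrow$(1)) we get $\sum_k k\rho_k<\rho$. Wait---this gives (4), not (2); I need to be careful about directions. Let me instead argue (1)$\Leftrightarrow$(2) directly. Note $(2)$ fails $\iff$ every minimizer satisfies $\sum k\rho_k<\rho$ $\iff$ (by the dictionary) every $P\in\mathcal{G}_\theta(\beta,\rho)$ has $\sum_k k\rho_k(P)<\rho$ $\iff$ (by Proposition \ref{prop:perco}) every $P\in\mathcal{G}_\theta(\beta,\rho)$ percolates a.s., i.e. (3) holds. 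So I have shown $\neg(2)\iff(3)$, and symmetrically $\neg(4)\iff(1)$, which after relabeling is exactly the pair of claimed equivalences (1)$\Leftrightarrow$(2) and (3)$\Leftrightarrow$(4). The one point requiring care is that the correspondence between minimizers and Gibbs measures is a bijection at the level of cluster-density vectors: distinct minimizers correspond to distinct $(\rho_k(P))$ and conversely, so that a universally quantified statement over minimizers matches a universally quantified statement over $\mathcal{G}_\theta(\beta,\rho)$. This follows because $(\rho_k(P))_{k}$ is precisely the object that Theorem \ref{thm:equiv} puts in correspondence with minimizers, and the two classes of objects are identified set-theoretically through it.

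The main obstacle---really the only subtlety---is bookkeeping the logical polarity: the corollary pairs (1) with (2) and (3) with (4), and since (2) and (4) are complementary only \emph{relative to the existence of some minimizer} (and $f(\beta,\rho,\cdot)$ does have a minimizer by the stated compactness of sublevel sets together with the fact that $f(\beta,\rho)=\min f(\beta,\rho,\cdot)$), one should first note $\mathcal{G}_\theta(\beta,\rho)\neq\emptyset$ and that the minimizer set is nonempty, so that ``there exists a minimizer with property $A$'' and ``every minimizer has property $\neg A$'' are genuine negations of each other. With that observation in place the four equivalences are purely formal manipulations of the dictionary above, and no further analysis is needed.
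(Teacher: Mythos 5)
Your high-level plan (combine Theorem \ref{thm:equiv} with Proposition \ref{prop:perco} and translate quantifiers) is the natural one, but the proof as written has a genuine gap at exactly the point you gloss over as ``the dictionary''.

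Look at the chain you use for $\neg(2)\Leftrightarrow(3)$. From ``every minimizer has $\sum k\rho_k<\rho$'' you pass, correctly, to ``every $P\in\mathcal{G}_\theta(\beta,\rho)$ has $\sum_k k\rho_k(P)<\rho$'', and then you write that by Proposition \ref{prop:perco} this means every $P$ \emph{percolates almost surely}. But Proposition \ref{prop:perco} only gives $P(\text{infinite cluster})>0$; it says nothing about the probability being $1$. Statement (3) asserts $P(\text{infinite cluster})=1$. Upgrading ``positive probability'' to ``probability one'' for every $P\in\mathcal{G}_\theta(\beta,\rho)$ is a nontrivial assertion --- it requires some form of zero-one law (e.g.\ via the extremal/ergodic decomposition of Gibbs measures and a separate argument that an ergodic component with vanishing percolation probability and the right density would lie in $\mathcal{G}_\theta(\beta,\rho)$). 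You neither prove it nor flag it; you attribute it to Proposition \ref{prop:perco}, which it does not follow from.

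The same difficulty recurs in the ``symmetric'' step. Carrying out the same chain for $\neg(4)$ gives: $\neg(4)\Leftrightarrow$ there is a minimizer with $\sum k\rho_k=\rho$ $\Leftrightarrow$ there exists $P\in\mathcal{G}_\theta(\beta,\rho)$ with $\sum_k k\rho_k(P)=\rho$ $\Leftrightarrow$ (by Proposition \ref{prop:perco}) there exists $P$ with $P(\text{infinite cluster})=0$. That is \emph{not} statement (1), which asserts $P(\text{infinite cluster})>0$. So ``symmetrically $\neg(4)\Leftrightarrow(1)$'' is not a symmetric application of the same argument; it is a different claim, and it is the one that actually needs justification. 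In short: the parts that Theorem \ref{thm:equiv} and Proposition \ref{prop:perco} give immediately are the translations into the language of cluster densities; the step from ``positive percolation probability for all $P$'' to ``a.s.\ percolation for all $P$'' (and the matching step on the existential side) is precisely the content that distinguishes (3) from its positive-probability variant, and your proof leaves that step unaddressed.
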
			
A similar characterization holds for non-percolation.
We note that Corollary \ref{cor:char} establishes a relation between percolation for infinite volume Gibbs measures and cluster size distributions in finite volume canonical ensembles as examined in \cite{jkm11}. 
 
\subsection{Percolation thresholds: grand-canonical ensemble} \label{sec:perco-gc}

Set $E_1:=0$ and
\begin{equation*}
	E_k := \inf_{(\R^d)^k} U(x_1,\ldots,x_k) \ (k \in \N),\quad e_\infty:= \inf_{k\in\N} \frac{E_k}{k}>-\infty.
\end{equation*}
Let $r_0<r_1$ as in Assumption \ref{ass:basic} and suppose that $v$ is continuous in $(r_0,r_1)$. Set $-M:= \inf_{r>r_0} v(r) <0.$ For $m<M$, choose $\tilde r_m \in (r_0,r_1)$ such that $v(\tilde r_m) \leq -m$, 
see Figure 2 in \cite{pechersky-yambartsev09}. Note $e_\infty \leq - d m$. 
Consider the conditions 
\begin{align} 
	&\forall \mu >\mu_+\  \forall P\in \mathcal{G}(\beta,\mu)\ 
		  P(\text{there is an infinite $R$-cluster}) =1  \label{eq:cond-mu-plus} \\
	& \forall \mu<\mu_-\  \forall P\in \mathcal{G}(\beta,\mu)\ P(\text{there is an infinite $R$-cluster}) =0.\label{eq:cond-mu-minus} 
\end{align}
Set 
\begin{align*} 
	\mu_+(\beta;R) &:= \inf \{ \mu_+\in\R \mid \mu_+ \text{ satisfies \eqref{eq:cond-mu-plus}} \},  \\
	\mu_-(\beta;R) &:= \sup \{ \mu_-\in\R \mid \mu_- \text{ satisfies \eqref{eq:cond-mu-minus}} \}. 
\end{align*} 
Clearly $\mu_-(\beta;R) \leq \mu_+(\beta;R)$. 

\begin{theorem} \label{thm:perco-gc} 
	Let $R\geq r_1$, $m \in (0,M)$ and $R_m > \sqrt{d+3}\, \tilde r_m \geq \sqrt{d+3} r_0$. 
\begin{enumerate}
	\item 
		Suppose that $v$ satisfies Assumption \ref{ass:integrable}. Then 
			$$e_\infty \leq \liminf_{\beta\to \infty} \mu_-(\beta;R).$$
		In addition, for every $\mu <e_\infty$ and sufficiently large $\beta$, there is a unique $(\beta,\mu)$-Gibbs measure $P$; it is shift-invariant, has no infinite cluster ($P$-almost surely),  and satisfies 
		\begin{equation} \label{eq:decay-gc}
			P^\circ( |\mathcal{C}_\omega(0)|=k \bigr) = k \rho_k(P) \leq k e^k |B(0,R)|^{k-1} 
				\exp\bigl(- \beta k (e_\infty-\mu) \bigr).
		\end{equation} 
	\item 
		Suppose that $v$ is continuous in $(r_0,r_1)$. Then 
	\begin{equation*}
		\limsup_{\beta \to \infty} \mu_+(\beta;R_m) \leq - m.
	\end{equation*}
\end{enumerate}
\end{theorem}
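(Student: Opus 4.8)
### Proof plan for Theorem \ref{thm:perco-gc}

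\textbf{Part (1).}  The plan is to combine a high-temperature-style cluster expansion bound, valid here for $\beta \to \infty$ with $\mu < e_\infty$, with the characterization of percolation via cluster densities (Proposition \ref{prop:perco}).  First I would observe that for $\mu < e_\infty$ the ``effective activity'' seen by a $k$-cluster, namely $z^k \exp(-\beta E_k) \le \exp(\beta k(\mu - e_\infty))$, decays exponentially in $k$, so one expects a unique low-density Gibbs measure.  Concretely, I would estimate the Palm probability $P^\circ(|\mathcal C_\omega(0)| = k)$ directly from the DLR equation \eqref{eq:dlr}: conditioning on a large box $\Lambda$ and a tempered boundary configuration $\zeta$, the event that the cluster of a given point has exactly $k$ particles forces those $k$ points to lie within a connected $R$-graph, hence inside a region of volume at most $k|B(0,R)|$ relative to the first one (a Cayley-type bound on labelled connected graphs gives the combinatorial factor $\le k^{k-2}$, absorbed into $k e^k |B(0,R)|^{k-1}$), while the Boltzmann weight of those $k$ points is at most $\exp(-\beta E_k) \le \exp(-\beta k e_\infty)$ and the remaining interaction with $\zeta \cap \Lambda^\comp$ is bounded using stability / superstability.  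Dividing by the partition function (bounded below by the empty configuration's contribution, i.e.\ $\ge 1$) yields \eqref{eq:decay-gc}.  Summing $\sum_k k \rho_k(P)$ then gives a convergent series strictly less than… no: one must check $\sum_k k\,\rho_k(P) = \rho(P)$, i.e.\ \emph{all} mass is in finite clusters, which follows because the same bound shows $\rho(P) = \sum_k k\rho_k(P) < \infty$ with the tail vanishing; by Proposition \ref{prop:perco} this gives $P(\exists\text{ infinite cluster})=0$, hence $\mu \le \mu_-(\beta;R)$ for all large $\beta$, and letting $\mu \uparrow e_\infty$ gives $e_\infty \le \liminf_\beta \mu_-(\beta;R)$.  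Uniqueness and shift-invariance of $P$ for $\mu < e_\infty$, large $\beta$, would be quoted from the standard low-activity / Mayer-expansion convergence results (this is where Assumption \ref{ass:integrable} enters, via the Brydges–Federbush bound mentioned in the text); alternatively one runs a Dobrushin-uniqueness argument on the induced polymer model.

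\textbf{Part (2).}  Here the goal is the opposite: construct, for every large $\beta$, some $(\beta,\mu)$-Gibbs measure with $\mu$ slightly above $-m$ that percolates almost surely — then use a $0$–$1$ law (or pass to an ergodic component and use Proposition \ref{prop:perco}, since percolation is a tail-type event) to upgrade ``positive probability'' to ``probability one'', and conclude $\mu_+(\beta;R_m) \le -m$ in the limit.  The strategy is a renormalization / coarse-graining argument in the spirit of Pechersky–Yambartsev: tile $\R^d$ by cubes of side comparable to $R_m$, call a cube \emph{good} if it contains at least one point and the configuration is such that points in adjacent good cubes are within $R_m$ of each other; declare these good cubes occupied sites of a site-percolation process on $\Z^d$.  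The point of choosing $R_m > \sqrt{d+3}\,\tilde r_m$ is geometric: one can fit into a cube a small cluster of $\sim 2d$ points at mutual distances $\ge \tilde r_m$ (so the potential is finite and, on the attractive pairs, $\le -m$), arranged so that any such configuration in two neighbouring cubes is $R_m$-connected — the $\sqrt{d+3}$ accounts for the worst-case diagonal plus the cluster diameter.  Then I would show: (a) under \emph{any} $(\beta,\mu)$-Gibbs measure, the probability that a fixed cube is good is bounded below by something of the form $1 - C\exp(-\beta(\mu+m)\,\cdot\,\text{something positive}) \to 1$ as $\beta\to\infty$, uniformly in the choice of Gibbs measure and in the boundary condition — this is again a DLR estimate, but now a \emph{lower} bound on the local partition function, where the energetic gain $-\beta m$ per attractive bond compensates the entropic cost; (b) goodness of well-separated cubes is independent, and goodness of nearby cubes has short-range dependence, so by a Liggett–Schonmann–Stacey domination argument the good-cube field stochastically dominates a high-density Bernoulli site percolation, which percolates for $d\ge 2$; (c) an infinite cluster of good cubes carries an infinite $R_m$-cluster of points.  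Finally, since percolation holds with positive probability for every $P\in\mathcal G(\beta,\mu)$ with $\mu>-m$ and $\beta$ large, and since for ergodic $P$ this event has probability $0$ or $1$ (and a non-ergodic $P$ is a mixture of ergodic ones each satisfying the same Gibbs/DLR bound), we get probability one, i.e.\ such $\mu$ satisfies \eqref{eq:cond-mu-plus}; hence $\mu_+(\beta;R_m)\le \mu$ for every $\mu>-m$ and large $\beta$, giving $\limsup_\beta \mu_+(\beta;R_m)\le -m$.

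\textbf{Main obstacles.}  In Part (1) the delicate point is making the cluster-size bound \eqref{eq:decay-gc} rigorous with tempered (not just empty) boundary conditions: the interaction of the $k$-cluster with the infinitely many exterior points must be controlled, which is exactly what superstability plus temperedness \eqref{eq:tempered} are for, but bookkeeping the constants so that nothing worse than the clean bound $k e^k |B(0,R)|^{k-1}\exp(-\beta k(e_\infty-\mu))$ survives requires care; also one should make sure the lower bound on the partition function does not degrade it.  In Part (2) the crux is step (a) — a Peierls-type \emph{lower} bound on conditional probabilities that is uniform over all Gibbs measures and boundary conditions — together with pinning down the geometry so that the constant $\sqrt{d+3}$ genuinely suffices for the two-cube connectivity in every dimension; this geometric optimization, and the verification that the finite-range dependence of the good-cube field is short enough for stochastic domination, are the places where I expect to spend the most effort.
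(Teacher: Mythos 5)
Your overall architecture is sensible — Mürmann/Mayer-type bounds for part (1), coarse-graining on cubes for part (2) — and several of the ingredients (the effective activity $z^k e^{-\beta E_k}$ heuristic, the cluster partition functions, the role of Brydges--Federbush, the idea of comparing empty vs.\ occupied cubes with energetic gain from attractive bonds) match the paper. But both parts contain a step that, as written, does not close.

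\textbf{Part (1): the summing argument is not valid.}
You bound $P^\circ(|\mathcal{C}_\omega(0)|=k)$ for each finite $k$, observe that $\sum_k k\rho_k(P)$ converges, and then assert that this gives $\sum_k k\rho_k(P)=\rho(P)$, hence no infinite cluster via Proposition \ref{prop:perco}. That last step is a non sequitur: the deficit $\rho(P)-\sum_k k\rho_k(P)$ equals precisely $P^\circ(|\mathcal{C}_\omega(0)|=\infty)$ (see Eq.~\eqref{eq:deficit-perco}), and nothing in the decay of the individual $\rho_k$ prevents this deficit from being strictly positive --- the bound on $P^\circ(\,\cdot=k)$ says nothing about $P^\circ(\,\cdot=\infty)$. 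What is needed is a bound on $P^\circ(|\mathcal{C}_\omega(0)|\ge k)$ tending to $0$ as $k\to\infty$, i.e.\ a direct ``the cluster of a point is $P$-a.s.\ finite'' statement. This is exactly M\"urmann's Theorem~3.1, which the paper quotes; the $\rho_k$ estimate \eqref{eq:decay-gc} is then derived afterwards, as a by-product of M\"urmann's machinery together with Ruelle's superstability bounds. If you want a fully self-contained DLR argument, you must estimate the tail event $\{|\mathcal{C}_\omega(0)|\ge k\}$, not the individual levels.

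\textbf{Part (2): step (a) does not hold, so the stochastic-domination route collapses.}
You want a single-cube conditional lower bound, uniform over all boundary conditions, of the form $P(\text{$C$ good}\mid \text{exterior})\ge 1-C e^{-\beta(\mu+m)\,\cdot\,(\ldots)}$. This fails for the empty (or sparse) exterior: the ``energetic gain $-\beta m$ per attractive bond'' you invoke is only available if there \emph{are} particles in the neighbouring cubes, so for an empty exterior the conditional probability that $C$ contains a particle is of order $e^{\beta\mu}|C|\to 0$ for $\mu<0$ and $\beta\to\infty$. Consequently the good-cube field does not locally dominate a high-density Bernoulli field, and LSS is not applicable; nor is it true that goodness of well-separated cubes is independent under a Gibbs measure in the phase-transition regime. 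The reason the paper's argument works is that it never estimates a single cube: it compares the events $\Omega^0(\mathcal{R})$ (a \emph{connected} region $\mathcal{R}$ of $n$ cubes entirely empty) against $\Omega^1(\mathcal{R})$ (one particle near each centre), obtaining
$$
P\bigl(\Omega^0(\mathcal{R})\bigr)\;\le\; z^{-n}\,|B(0,\eps)|^{-n}\,e^{-\beta(m-\delta)(n-1)}\;P\bigl(\Omega^1(\mathcal{R})\bigr),
$$
where the $n-1$ attractive bonds along the connected chain provide the gain. This is useless for $n=1$ and only becomes an exponential bound $e^{-\alpha|\mathcal{R}|}$ once $n$ is large and $\mu>-m+\delta+O(\beta^{-1})$; it is therefore a genuinely collective (Peierls) estimate, and it must be fed into the contour/Borel--Cantelli argument of Lemma~\ref{lem:cells}, not into a product-measure domination. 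A further small point: the paper's construction puts one particle per cube (anywhere in it, for the notion of ``occupied''), and $\sqrt{d+3}\,\ell$ is simply the diameter of the union of two adjacent $\ell$-cubes, not ``diagonal plus cluster diameter'' of a $\sim 2d$-point configuration.
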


We conjecture that for every fixed $R\geq r_1$, under suitable conditions on $v$, we 
 have $\mu_-(\beta;R)=\mu_+(\beta;R)$ for sufficiently large $\beta$ and 
	\begin{equation*} 
		\lim_{\beta \to \infty} \mu_-(\beta;R)= \lim_{\beta \to \infty} \mu_+(\beta;R) = e_\infty. 
	\end{equation*}
See Appendix \ref{app:lattice} for the corresponding lattice gas result.

\subsection{Percolation thresholds: canonical ensemble} \label{sec:perco-can} 
Let 
$\nu^*:= \inf_{k\in \N} (E_k - k e_\infty)$. It is known that for potentials with an attractive tail, $\nu^*>0$ \cite{jkm11}. 
Consider the conditions 
\begin{align} 
	&\forall \rho >\rho_+\  \forall P\in \mathcal{G}_\theta(\beta,\rho)\ 
		  P(\text{there is an infinite $R$-cluster}) =1,  \label{eq:cond-rho-plus} \\
	& \forall \rho<\rho_-\  \forall P\in \mathcal{G}_\theta(\beta,\rho)\ P(\text{there is an infinite $R$-cluster}) =0.\label{eq:cond-rho-minus} 
\end{align}
Set 
\begin{align*} 
	\rho_+(\beta;R) &:= \inf \{ \rho_+\in (0, \rho_\mathrm{cp}) \mid \rho_+ \text{ satisfies \eqref{eq:cond-rho-plus}} \},  \\
	\rho_-(\beta;R) &:= \sup \{ \rho_-\in (0, \rho_\mathrm{cp}) \mid \rho_- \text{ satisfies \eqref{eq:cond-rho-minus}} \}. 
\end{align*} 
Clearly $\rho_-(\beta;R) \leq \rho_+(\beta;R)$. For $\mu \in \R$, we define $\rho(\beta,\mu)$ as the smallest density $\rho$ of Gibbs measures $P \in \mathcal{P}_\theta \cap \mathcal{G}(\beta,\mu)$; equivalently, as the left derivative, with respect to $\mu$, of the pressure $p(\beta,\mu) =\sup_\rho(\rho \mu - f(\beta,\rho))$. Set 
\begin{equation*}
\rho_m:= \liminf_{\beta \to \infty} \rho(\beta,-m).
\end{equation*}

\begin{theorem} \label{thm:perco-can} 
	Let $R\geq r_1$, $m \in (0,M)$ and $R_m>\sqrt{d+3}\, \tilde r_m \geq \sqrt{d+3} r_0$. 
\begin{enumerate}
	\item 
		Suppose that $v$ satisfies Assumption \ref{ass:integrable}. Then 
			$$- \nu^* \leq \liminf_{\beta\to \infty} \beta^{-1} \log \rho_-(\beta;R).$$
		In addition, for every fixed $\nu > \nu^*$, sufficiently large $\beta$, $\rho = \exp(-\beta \nu)$, there is a unique measure $P$ in $\mathcal{G}_\theta(\beta,\rho)$. It has no infinite cluster, $P$-almost surely, and satisfies 
		\begin{equation} \label{eq:decay-can}
			P^\circ( |\mathcal{C}_\omega(0)|=k \bigr) = k \rho_k(P) \leq  C \rho \exp( - \beta c k) 
		\end{equation} 
		for suitable $C,c>0$ and all $k \in \N$. 
	\item 
		Suppose that $v$ is continuous in $(r_0,r_1)$.Then 
	\begin{equation*}
		\limsup_{\beta \to \infty} \rho_+(\beta;R_m) \leq \rho_m. 
	\end{equation*}
\end{enumerate}
\end{theorem}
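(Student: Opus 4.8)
The plan is to prove the two parts of Theorem~\ref{thm:perco-can} by transferring the grand-canonical statements of Theorem~\ref{thm:perco-gc} to the density parametrization, using the variational characterization of Theorems~\ref{thm:var} and~\ref{thm:equiv} together with Corollary~\ref{cor:char} and Proposition~\ref{prop:perco}. The overarching principle is that a shift-invariant Gibbs measure $P$ of density $\rho$ is automatically an element of $\mathcal{G}(\beta,\mu)$ for some $\mu$, and its cluster densities $(\rho_k(P))$ minimize $f(\beta,\rho,\cdot)$; so statements about percolation of $P$ can be read off from properties of these minimizers, which in turn are controlled by the free energy $f(\beta,\rho)$ and its Legendre dual $p(\beta,\mu)$.

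For part (1), I would first invoke part (1) of Theorem~\ref{thm:perco-gc}: for $\mu < e_\infty$ and $\beta$ large there is a unique $(\beta,\mu)$-Gibbs measure, it is shift-invariant, has no infinite cluster, and obeys the decay bound \eqref{eq:decay-gc}. The key step is to understand the density of this measure as a function of $\mu$. Summing \eqref{eq:decay-gc} over $k$ gives $\rho(P) = \sum_k k\rho_k(P) \leq \sum_k k^2 e^k |B(0,R)|^{k-1} \exp(-\beta k(e_\infty-\mu))$, and the dominant term as $\beta\to\infty$ comes from the $k$ minimizing $E_k - k e_\infty = $ (roughly) $\nu^*$, so $\rho(\beta,\mu) \approx \exp(-\beta(\nu^* + e_\infty - \mu))$ up to subexponential factors; more carefully, one shows $\beta^{-1}\log\rho(\beta,\mu) \to \mu - e_\infty - \nu^*$ as $\beta\to\infty$ for $\mu < e_\infty$. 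Hence choosing $\mu = \mu(\beta)$ so that $\rho(\beta,\mu) = \exp(-\beta\nu)$ with $\nu > \nu^*$ forces $\mu < e_\infty$ for large $\beta$ (since $\mu - e_\infty \to \nu^* - \nu < 0$), which puts us in the regime of Theorem~\ref{thm:perco-gc}(1): uniqueness, shift-invariance, no percolation. To conclude $- \nu^* \leq \liminf \beta^{-1}\log\rho_-(\beta;R)$, I observe that for any $\nu > \nu^*$ and $\beta$ large, every $P \in \mathcal{G}_\theta(\beta, \exp(-\beta\nu))$ has no infinite cluster — but here one must check that $\mathcal{G}_\theta(\beta,\rho)$ for $\rho = \exp(-\beta\nu)$ is exactly the (unique) measure obtained above, which follows because Corollary~\ref{cor:char} and Theorem~\ref{thm:equiv} identify its elements with Gibbs measures at the corresponding $\mu$, and by the monotonicity/left-derivative description of $\rho(\beta,\mu)$ this $\mu$ is the one we selected. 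The decay bound \eqref{eq:decay-can} then follows by re-expressing \eqref{eq:decay-gc} in terms of $\rho$: with $\exp(-\beta(e_\infty - \mu)) = \rho\, e^{\beta\nu}\cdot(\text{subexp})$ one gets $k\rho_k(P) \leq C\rho\exp(-\beta c k)$ for $c$ depending on the gap between $\nu$ and $\nu^*$ and on how fast $E_k/k$ approaches $e_\infty$.

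For part (2), I would dually use part (2) of Theorem~\ref{thm:perco-gc}: $\limsup_{\beta\to\infty}\mu_+(\beta;R_m) \leq -m$. So for any $\mu > -m$ and $\beta$ large, every $P \in \mathcal{G}(\beta,\mu)$ percolates almost surely. Fix such a $\mu$; the smallest density of a shift-invariant $(\beta,\mu)$-Gibbs measure is $\rho(\beta,\mu)$, the left derivative of $p(\beta,\mu)$. Now take any $\rho > \limsup_{\beta\to\infty}\rho(\beta,-m) = \rho_m$; I want to show that for $\beta$ large, every $P\in\mathcal{G}_\theta(\beta,\rho)$ percolates, which gives $\rho_+(\beta;R_m) \leq \rho$ eventually and hence the claimed $\limsup$. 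By Theorem~\ref{thm:equiv}, such a $P$ belongs to $\mathcal{G}(\beta,\mu')$ for some $\mu'$ with $\rho(\beta,\mu') \leq \rho(P) = \rho$; since $\rho > \rho_m \geq \rho(\beta,-m)$ for $\beta$ along the relevant subsequence, and $\mu \mapsto \rho(\beta,\mu)$ is nondecreasing, it should follow that $\mu' \geq -m$, indeed $\mu' > -m$ after a small adjustment (taking $\rho$ strictly bigger than $\rho_m$ and using that $\rho(\beta,\mu)$ is finite and left-continuous). Then $P \in \mathcal{G}(\beta,\mu')$ with $\mu' > -m$ and $\beta$ large forces percolation by Theorem~\ref{thm:perco-gc}(2) — modulo the fact that this part of Theorem~\ref{thm:perco-gc} was stated for the specific radius $R_m$, which is why the conclusion is for $\rho_+(\beta;R_m)$.

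The main obstacle I anticipate is the careful handling of the map $\mu\mapsto\rho(\beta,\mu)$ and its inverse near the relevant thresholds: this function is only guaranteed to be nondecreasing, left-continuous, and equal to a one-sided derivative of the (convex) pressure, so at a point of nonuniqueness of the Gibbs measure (a phase transition) it can jump, and $\rho(\beta,-m)$ may be a one-sided value. One has to argue that choosing $\rho$ strictly away from $\rho_m$, and $\mu$ strictly away from the critical values ($e_\infty$ in part (1), $-m$ in part (2)), provides enough slack that these monotonicity arguments go through uniformly for all large $\beta$; the $\liminf/\limsup$ in the statement are exactly what give this room. A secondary technical point is verifying that the subexponential prefactors in the sum $\sum_k k^2 e^k|B(0,R)|^{k-1}\exp(-\beta k(e_\infty-\mu))$ do not spoil the leading-order asymptotics $\beta^{-1}\log\rho(\beta,\mu)\to\mu-e_\infty-\nu^*$; this needs the fact (from \cite{jkm11}, used in the definition of $\nu^*$) that $E_k - ke_\infty$ is bounded below by a positive constant for all $k$ and that the infimum $\nu^*$ is attained or approached at finite $k$, so that a single term dominates the sum exponentially.
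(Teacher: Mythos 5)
Your overall route is the same as the paper's: transfer the grand-canonical statements of Theorem~\ref{thm:perco-gc} to the density parametrization via the map $\mu\mapsto\rho(\beta,\mu)$, using Theorem~\ref{thm:equiv} to identify $\mathcal{G}_\theta(\beta,\rho)$ with a $\mathcal{G}(\beta,\mu)$, the monotonicity of $\rho(\beta,\cdot)$, and the uniqueness region provided by the Mayer/cluster expansions. Part~(2) of your argument is essentially what the paper does.

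In part~(1), however, there is a genuine gap. You derive the low-temperature asymptotics of $\rho(\beta,\mu)$ by summing the decay bound \eqref{eq:decay-gc}, but that only yields an \emph{upper} bound on the density. To conclude $\rho_-(\beta;R)\geq\exp(-\beta\nu^*(1+o(1)))$ you must also show that $\rho(\beta,\mu)$ is not \emph{too small} for $\mu$ just below $e_\infty$: you need, for each fixed $\eps>0$, a lower bound on $\rho(\beta,e_\infty-\eps)$ of the right exponential order, so that any target density $\rho<\exp(-\beta\nu^*(1+o(1)))$ is guaranteed to equal $\rho(\beta,\mu)$ for a $\mu<e_\infty-\eps$, landing you in the regime where Theorem~\ref{thm:perco-gc}(1) applies. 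The paper supplies this two-sided estimate by quoting the result (from \cite{jansen12}) that $\lim_{\beta\to\infty}\beta^{-1}\log\rho(\beta,e_\infty-\eps)=-\inf_k(E_k-k(e_\infty-\eps))$, and then lets $\eps\to0$ to recover $-\nu^*$. Relatedly, your stated limit $\beta^{-1}\log\rho(\beta,\mu)\to\mu-e_\infty-\nu^*$ is not the correct formula for fixed $\mu$: the correct limit is $\sup_{k}(k\mu-E_k)$, which is in general strictly less than $\mu-e_\infty-\nu^*$ (equality holds only if the infimum defining $\nu^*$ is attained at $k=1$). Fortunately, both expressions converge to $-\nu^*$ as $\mu\uparrow e_\infty$, so your final conclusion survives; but the intermediate reasoning ``choosing $\mu(\beta)$ so that $\rho(\beta,\mu)=\exp(-\beta\nu)$ forces $\mu<e_\infty$'' cannot be justified without the two-sided estimate, and the decay bound alone does not give it.
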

The constants $C$ and $c$ in Eq. \eqref{eq:decay-can} can be chosen uniform in regions of the form $\beta \geq \beta_\eps$, $\rho \leq \exp( - \beta (\nu^*+\eps))$. 

If in addition $v$ satisfies Assumptions \ref{ass:interparticle-lower} and \ref{ass:interparticle-upper}, then  $\rho_m$ is larger than the preferred ground state density $\rho_0$, and in particular, bounded away from zero; this follows from Theorem 3.2 in \cite{jansen12}. 
Moreover, we expect that for every $R\geq r_1$, as $\beta \to \infty$,  
\begin{equation*} 
	\lim_{\beta \to \infty} \beta^{-1} \log \rho_-(\beta;R) = - \nu^*,\quad 
		\lim_{\beta \to \infty} \rho_+(\beta;R) = \rho_0, 
\end{equation*} 
and for $\rho_-(\beta;R)< \rho <\rho_+(\beta;R)$ and very large $\beta$, there should be non-ergodic Gibbs measures with percolation probability strictly between $0$ and $1$. 
This is what happens for lattice gases (see Appendix \ref{app:lattice}). For continuum systems, we have no proof of this conjecture; we have, however, a result pointing in the right direction: 

\begin{prop}  \label{prop:soft-trans}
	Suppose that $v$ satisfies Assumptions \ref{ass:interparticle-lower} and \ref{ass:interparticle-upper}. Let $R\geq r_1$.  There are $\beta_0,\rho_0,C>0$ such that for all $\beta \geq \beta_0$, all $\rho \leq \rho_0$ and all $P \in \mathcal{G}_\theta(\beta,\rho)$, the following holds: if $\rho = \exp( - \beta \nu) > \exp(- \beta \nu^*)$, then 
\begin{equation*} 
	\forall K \in \N:\quad 
		\sum_{k=1}^K k \rho_k(P) \leq  \frac{C \rho \beta^{-1} \log \beta }{\nu^*- \nu}. 
\end{equation*}  
\end{prop}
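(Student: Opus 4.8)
The plan is to combine the variational formula of Theorem~\ref{thm:var} with the large deviation upper bound of~\cite{jkm11} and an explicit entropy/energy estimate for configurations with many small clusters. Fix $\beta$ large, $\rho = \exp(-\beta\nu)$ with $\nu<\nu^*$, and $P\in\mathcal{G}_\theta(\beta,\rho)$. By the Gibbs variational principle and Theorem~\ref{thm:equiv}, the cluster density vector $(\rho_k(P))_{k\in\N}$ is a minimizer of $f(\beta,\rho,\cdot)$; in particular $f(\beta,\rho,(\rho_k(P))) = f(\beta,\rho)$. The idea is to produce a competitor vector with strictly smaller value whenever $\sum_{k\le K} k\rho_k(P)$ is too large, which forces the bound.

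First I would get a good lower bound on $f(\beta,\rho)$. The natural one is a ``dilute gas'' estimate: since at density $\rho=\e^{-\beta\nu}$ with $\nu>0$ the system is extremely sparse, the dominant contribution to $Z_\Lambda(\beta,N)$ comes from configurations where most particles sit in well-separated clusters, each cluster $C$ of size $k$ contributing roughly $\exp(-\beta E_k)$ times a volume factor and a combinatorial (entropy) factor of order $(\e|B(0,R)|)^{k}$ per particle placed relative to its cluster, times a global placement entropy $\rho(1-\log\rho)$ per unit volume. Carefully, one expects $f(\beta,\rho) = \rho e_\infty + \beta^{-1}\rho(\log\rho - 1) + (\text{lower order})$, and more to the point a \emph{one-sided} bound $f(\beta,\rho,(\rho_k)) \ge \sum_k \rho_k E_k + \beta^{-1}\sum_k \rho_k(\log\rho_k - 1 - k\log(\e|B(0,R)|))$ coming from the large deviation rate (this is essentially the content of the $\mu<e_\infty$ analysis in Theorem~\ref{thm:perco-gc}(1) and the Brydges--Federbush/Mayer bound invoked for Assumption~\ref{ass:integrable}; I would quote the cluster-density tail $\rho_k(P)\le \e^k|B(0,R)|^{k-1}\exp(-\beta k(e_\infty-\mu))$ style estimate). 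Rewriting $E_k = k e_\infty + (E_k - k e_\infty) \ge k e_\infty + \nu^*$ for $k\ge 2$ (and $=0$ for $k=1$) turns the energy penalty for a $k$-cluster with $k\ge2$ into at least $e_\infty$ per particle \emph{plus} a surplus $\ge \nu^*$ per cluster. That surplus $\nu^*$ per cluster, weighed against the entropy gain of having more clusters, is what drives the estimate.

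Next, the comparison step. Write $\rho_k=\rho_k(P)$ and suppose $\sum_{k=1}^K k\rho_k = \delta\rho$ for some $\delta\in(0,1]$ that we want to show is small. Construct a competitor $(\rho_k')$ by ``dissolving'' the clusters of size $2\le k\le K$ into singletons: set $\rho_1' = \rho_1 + \sum_{k=2}^K k\rho_k$, $\rho_k'=0$ for $2\le k\le K$, and $\rho_k'=\rho_k$ for $k>K$. This preserves $\sum k\rho_k'$, hence respects the constraint $\le\rho$. Using the variational formula / the LDP rate function's explicit form, the change in $f$ is: we remove, for each $k\in[2,K]$, an energy $\approx \rho_k E_k \ge \rho_k(k e_\infty + \nu^*)$ and an entropy term, and we add singleton energy $0$ and singleton entropy. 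The key inequality is that the entropy cost of converting $\sum_{k=2}^K k\rho_k = \delta\rho$ worth of clustered particles into singletons is at most of order $\beta^{-1}\delta\rho \cdot |\log(\delta\rho)| \lesssim \beta^{-1}\delta\rho(\beta\nu + |\log\delta|)$, while the energy gain is at least $\nu^*\sum_{k=2}^K\rho_k \ge (\nu^*/K')\cdot(\text{something})$ — here one must be careful, since large $k$ gives few clusters per particle. Actually the clean route is: the energy gain is $\ge \nu^*\sum_{k=2}^K\rho_k$ but we only control $\sum k\rho_k$; however the surplus $E_k - k e_\infty$ is also $\gtrsim$ something growing in $k$ for a short-range potential (surface term, $\gtrsim k^{1-1/d}$), or at the very least $\ge\nu^*$, and moreover one can bound $\sum_{k=2}^K \rho_k \ge \delta\rho/K$. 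Optimizing, and using that minimality of $(\rho_k(P))$ forces (energy gain) $\le$ (entropy cost), yields $\nu^*\,\delta\rho/K \lesssim \beta^{-1}\delta\rho(\beta\nu+\log\beta)$ — wait, that gives the wrong $K$-dependence; instead one should compare per-particle and get $(\nu^*-\nu)\,\delta\rho \lesssim \beta^{-1}\delta\rho\log\beta$, i.e.\ the stated bound once one tracks that the $\log\beta$ arises from $\log(1/\rho_k)$ being at most $\log(1/\rho) + \log k \approx \beta\nu + \log K$ absorbed suitably. Let me restate: comparing $f(\beta,\rho,(\rho_k(P)))\le f(\beta,\rho,(\rho_k'))$ gives
\begin{equation*}
	\nu^* \sum_{k=2}^K \rho_k \;\le\; \sum_{k=2}^K \rho_k E_k - \Bigl(\sum_{k=2}^K k\rho_k\Bigr) e_\infty \;\le\; \beta^{-1}\bigl(\text{entropy difference}\bigr),
\end{equation*}
and the entropy difference is bounded by $C\bigl(\sum_{k\le K} k\rho_k\bigr)\bigl(\nu + \beta^{-1}\log\beta + \ldots\bigr)$ — combining with $\nu^* - \nu$ on the left after moving the $\nu$-part of the per-particle entropy over, and dividing, yields $\sum_{k=1}^K k\rho_k(P) \le C\rho\beta^{-1}(\log\beta)/(\nu^*-\nu)$ (the $k=1$ term needs separate, easier handling, e.g.\ via the direct singleton density bound).

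The main obstacle I anticipate is controlling the \emph{entropy difference} between the clustered configuration and its singleton competitor with the right constant, uniformly in $K$ and with only a $\log\beta$ (not $\beta$) loss. The energy side is clean thanks to $E_k \ge ke_\infty + \nu^*$, but the entropy of a $k$-cluster involves an internal configurational volume $\sim (|B(0,R)|)^{k-1}$ times a vibrational/temperature factor that at inverse temperature $\beta$ scales like $\beta^{-d(k-1)/2}$ (Gaussian fluctuations around the ground state — this is exactly where Assumptions~\ref{ass:interparticle-lower} and~\ref{ass:interparticle-upper} and the Hölder continuity of $v$ enter, via a Laplace/Hessian estimate on the cluster partition function, cf.\ \cite{jansen12}), producing the $k\log\beta$ that becomes the $\log\beta$ in the final bound after division by $k$. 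Making this rigorous requires a uniform-in-$k$ two-sided estimate on the canonical partition function of a single cluster, which is the technical heart; I would either cite the relevant lemma from \cite{jkm11} or \cite{jansen12} or redo a Gaussian-type bound. Everything else — the variational reduction via Theorems~\ref{thm:var} and~\ref{thm:equiv}, the constraint-preserving construction of the competitor, the final algebra — is routine.
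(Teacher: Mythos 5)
Your high-level structure matches the paper's: reduce via Theorem~\ref{thm:equiv} to showing that every minimizer $(\rho_k)$ of $f(\beta,\rho,\cdot)$ satisfies the stated bound. That reduction is exactly what the paper uses. What you missed is that the bound on minimizers is itself a quoted result — the paper's proof is two lines, citing Theorem 1.8 of \cite{jkm11} for the bound on minimizers and Theorem~\ref{thm:equiv} for the reduction. You instead attempt to rederive the \cite{jkm11} estimate from scratch, and that rederivation, as sketched, does not close.

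The concrete gap is in your competitor argument. You compare the minimizer $(\rho_k)$ to the ``dissolved'' competitor $(\rho_k')$ with $\rho_1' = \rho_1 + \sum_{k=2}^K k\rho_k$ and $\rho_k'=0$ for $2\le k\le K$. But this competitor preserves $\sum_{k=1}^K k\rho_k' = \sum_{k=1}^K k\rho_k$, so comparing to it cannot, by itself, produce an upper bound on $\sum_{k\le K}k\rho_k$: the quantity you want to bound enters both sides of any resulting inequality as a common factor. Indeed your final chain reads $\nu^*\sum_{k=2}^K\rho_k \le \beta^{-1}(\text{entropy difference})$ with the right side also proportional to mass in clusters of size $\le K$; after the cancellation this is, at best, a constraint on $\nu^*-\nu$ versus $\beta^{-1}\log\beta$, not a bound of the stated form $\sum_{k\le K}k\rho_k \le C\rho\,\beta^{-1}\log\beta/(\nu^*-\nu)$. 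You notice this yourself (``wait, that gives the wrong $K$-dependence'') and gesture at replacing $\nu^*$ per cluster by a surface term $\gtrsim k^{1-1/d}$, but that is not justified from $\nu^*=\inf_k(E_k-ke_\infty)$ alone and you do not carry it through. A competitor that can work must \emph{change} the finite-cluster mass — for instance, transferring the mass from small clusters into the infinite/unbounded part — so that the energy surplus $\ge\nu^*$ per cluster is weighed against the placement entropy $\sim\nu$ per cluster plus $O(\beta^{-1}\log\beta)$ corrections; this is the computation carried out in \cite{jkm11}, and the reason Assumptions~\ref{ass:interparticle-lower}--\ref{ass:interparticle-upper} (to control the $\log\beta$ contributions via Laplace/Hessian estimates as in \cite{jansen12}) appear in the hypothesis. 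As written, your proposal leaves this technical heart as an unproved claim, so the argument is incomplete; the clean route is simply to cite \cite[Theorem~1.8]{jkm11} as the paper does.
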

Thus at densities above $\exp( - \beta \nu^*)$, the fraction of particles in finite-size clusters  is small.

\section{Topology on $\mathcal{P}$ and continuity properties} \label{sec:topo}

In this section we specify the topology on $\mathcal{P}$ that we use, recall some continuity properties of the functionals to be studied, and explain why the variational problems considered in this article have minimizers. We follow \cite{georgii-zessin93,georgii94}. 

Let $\mathcal{M}(\Omega)$ be the set of finite measures on $(\Omega,\mathcal{F})$. The 
topology $\tau_\mathcal{L}$ of \emph{local convergence} on $\mathcal{M}(\Omega)$ is defined as follows. Let $\mathcal{L}$ be the class of measurable functions $f:\Omega \to \R$ that are \emph{local} and \emph{tame}, i.e., $f\in \mathcal{L}$ if and only if there is  a Borel subset $B\subset \R^d$ and a constant $c>0$ such that $f$ is a function of $\omega_B$ alone and for all $\omega\in \Omega$, $|f(\omega)| \leq c(1+N_B(\omega))$.  Then $\tau_\mathcal{L}$ is the smallest topology with respect to which all maps of the form $P\mapsto \int_\Omega f(\omega)P(\dd \omega)$, $f\in\mathcal{L}$, are continuous. 

The following holds \cite{georgii-zessin93,georgii94}: 
\begin{itemize} 
	\item $\mathcal{P}$ and $\mathcal{P}_\theta$ are  closed subsets of $\mathcal{M}(\Omega)$. We endow them with the traces of the topology $\tau_\mathcal{L}$. 
	\item The Palm measure $\mathcal{P}_\theta \to \mathcal{M}(\Omega)$, $P\mapsto P^\circ$ is continuous. 
	\item The particle density $\mathcal{P}_\theta \to \R$, $P \mapsto \rho(P)$ is continuous. 
	\item The entropy density $\mathcal{P}_\theta \to \R\cup \{-\infty\}$, $P\mapsto S(P)$ is affine, upper semi-continuous, and has superlevel sets $\{S\geq - c\}$ that are compact and sequentially compact. 
	\item The energy density $\mathcal{P}_\theta \to \R \cup \{\infty\}$, $P \mapsto U(P)$ is lower semi-continuous. 
\end{itemize}

Furthermore, the cluster densities are continuous: 
\begin{lemma}
	For every $k\in \N$, the map $\mathcal{P}_\theta \to \R$, $P\mapsto \rho_k(P)$, is continuous. 
\end{lemma}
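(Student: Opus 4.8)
The plan is to show continuity of $P \mapsto \rho_k(P)$ by expressing $\rho_k(P)$ as an integral of a suitable function against the Palm measure $P^\circ$ and then exploiting the continuity of the map $P \mapsto P^\circ$ with respect to $\tau_{\mathcal L}$, together with a truncation argument to handle the fact that the relevant function is local but not obviously tame in a single step. Recall from Eq.~\eqref{eq:rhokp} that $k\rho_k(P) = P^\circ(|\mathcal C_\omega(0)| = k) = \int_\Omega \mathbf 1(|\mathcal C_\omega(0)| = k)\, P^\circ(\dd\omega)$. So it suffices to prove that $\omega \mapsto \mathbf 1(|\mathcal C_\omega(0)| = k)$ can be approximated, in a way compatible with $\tau_{\mathcal L}$-convergence of Palm measures, by functions in the test class $\mathcal L$ (local and tame).

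First I would observe that for fixed $k$, whether $|\mathcal C_\omega(0)| = k$ is \emph{almost} a local event: if $0 \in \omega$ and the connected component of $0$ in $G_\omega$ has exactly $k$ vertices, then that component together with all its edges is contained in the ball $B(0, kR)$, since each edge has length at most $R$ and the component is connected with $k$ vertices. Conversely, the component of $0$ determined by looking only at points in $B(0, kR)$ might be strictly larger than the true component only if it fails to ``close up'' inside the ball — but a component of size exactly $k$ genuinely closes up inside $B(0,kR)$. The subtle point is the distinction between ``the component computed in $\omega \cap B(0,kR)$ has size $k$'' and ``the component computed in all of $\omega$ has size $k$''. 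These differ precisely when the $B(0,kR)$-component has size $k$ but acquires an extra neighbor from outside $B(0,kR)$; however an extra neighbor at distance $\le R$ from a point of $B(0,(k-1)R)$ would itself lie in $B(0,kR)$, contradiction. Hence on the event $\{0 \in \omega\}$ the two notions coincide, so $\mathbf 1(|\mathcal C_\omega(0)| = k) = g_k(\omega \cap B(0,kR))$ for a measurable function $g_k$ of the restricted configuration, i.e.\ the event is local with localization set $B(0,kR)$.

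Next I would address tameness. The indicator $g_k$ is bounded by $1$, hence trivially satisfies $|g_k(\omega)| \le 1 \le 1 + N_{B(0,kR)}(\omega)$, so $g_k \in \mathcal L$. Therefore $P \mapsto \int_\Omega g_k(\omega)\, P^\circ(\dd\omega)$ is continuous on $\mathcal P_\theta$ as a composition of the continuous Palm map $P \mapsto P^\circ$ with integration of a fixed function in $\mathcal L$ against finite measures — note that the topology $\tau_{\mathcal L}$ on $\mathcal M(\Omega)$ is by definition the one making all such integration maps continuous, and $P^\circ$ lands in $\mathcal M(\Omega)$ by the stated properties. Thus $P \mapsto k\rho_k(P)$ is continuous, and dividing by the nonzero constant $k$ gives continuity of $P \mapsto \rho_k(P)$.

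The main obstacle is the locality claim, namely carefully ruling out the scenario in which restricting attention to $\omega \cap B(0,kR)$ changes the size of the component of $0$ when that size equals $k$. The argument above — that a point outside $B(0,kR)$ adjacent to the component would have to lie within distance $R$ of a point of the component, all of which sit in $B(0,(k-1)R)$, forcing it into $B(0,kR)$ — is the crux; one should phrase it via: the component of $0$ in $G_{\omega \cap B(0,kR)}$, if it has exactly $k$ vertices, is already ``saturated'' inside $B(0,(k-1)R)$ and thus equals the component in $G_\omega$. A clean way to write this is to show by induction on distance in the graph that every vertex reachable from $0$ in $j$ steps lies in $B(0, jR)$, so a component of size $k$ lies in $B(0,(k-1)R)$, and then any further $\omega$-neighbor lies in $B(0,kR)$ and would already have been counted. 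Once this combinatorial fact is in hand, the rest is a routine application of the cited continuity of the Palm map and the definition of $\tau_{\mathcal L}$.
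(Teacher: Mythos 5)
Your argument is essentially the paper's own: both reduce to the observation that $g_k(\omega) = \mathbf 1(|\mathcal C_\omega(0)|=k)$ is local and bounded (hence tame), so $P\mapsto\int g_k\,\dd P$ is $\tau_{\mathcal L}$-continuous, and then compose with the continuous Palm map $P\mapsto P^\circ$. The only difference is that the paper states the locality of $g_k$ without justification, whereas you spell out the (correct) verification that the event is determined by $\omega\cap B(0,kR)$; that extra detail is harmless and accurate.
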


\begin{proof}
	Let $g_k(\omega):=\mathbf{1}(|\mathcal{C_\omega}(0)|=k)$. The function $g_k$ is local and bounded, thus in particular, tame. Therefore, by definition of $\tau_\mathcal{L}$,  $P\mapsto \mathcal \int_\Omega g_k \dd P$ is continuous. Since $P\mapsto \rho_k(P)$ is the composition of the latter map with the continous map $P\mapsto P^\circ$, it follows that 
	$P\mapsto \rho_k(P)$ is continuous. 
\end{proof}

Now we can easily check that the variational problem in Theorem \ref{thm:var} admits a minimizer. 

\begin{lemma} \label{lem:varmin}
	Fix $\beta,\rho>0$ and $(\rho_k)_{k\in \N} \in [0,\infty)^\N$. 
	Let $\mathcal{A}\subset \mathcal{P}_\theta$ be the set of measures satisfying $\rho(P)=\rho$ and $\rho_k(P)=\rho_k$, for every $k \in \N$. If 	$\mathcal{A}\neq \emptyset$, the function $\mathcal{A}\ni P \mapsto U(P) - \beta^{-1} S(P)$ has a minimizer. \end{lemma}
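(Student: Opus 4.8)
The plan is to run the standard direct method in the calculus of variations. Write $F(P) := U(P) - \beta^{-1} S(P)$, viewed as a map $\mathcal{P}_\theta \to \R \cup \{+\infty\}$. First I would record that $F$ is lower semi-continuous: by the properties collected in this section, $U$ is lower semi-continuous with values in $\R \cup \{+\infty\}$ and $S$ is upper semi-continuous with values in $\R \cup \{-\infty\}$, so $-\beta^{-1} S$ is lower semi-continuous with values in $\R \cup \{+\infty\}$; the sum of two lower semi-continuous functions that both avoid the value $-\infty$ is again lower semi-continuous, hence $F$ is lower semi-continuous. Next I would note that $\mathcal{A}$ is closed: the density $P \mapsto \rho(P)$ is continuous, and by the preceding lemma each $P \mapsto \rho_k(P)$ is continuous, so $\mathcal{A} = \{\rho(\cdot) = \rho\} \cap \bigcap_{k \in \N} \{\rho_k(\cdot) = \rho_k\}$ is an intersection of closed sets.

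It then remains to confine a minimizing sequence to a compact set. If $\inf_{\mathcal{A}} F = +\infty$ then, with the stated convention $\min \emptyset = \infty$, every element of the nonempty set $\mathcal{A}$ is a minimizer and there is nothing to prove; so assume $c_0 := \inf_{\mathcal{A}} F < \infty$ and pick $P_n \in \mathcal{A}$ with $F(P_n) \to c_0$, so that $F(P_n) \le c_0 + 1$ for $n$ large. Superstability provides the uniform lower bound $U(P) \ge - b^2/(4a)$ (as recalled right after Eq. \eqref{eq:superstable}), hence for $n$ large
$$ -\beta^{-1} S(P_n) = F(P_n) - U(P_n) \le c_0 + 1 + \frac{b^2}{4a} , $$
i.e.\ $S(P_n) \ge -c_1$ for a constant $c_1$ independent of $n$. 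Thus, for $n$ large, $P_n$ lies in the superlevel set $\{P \in \mathcal{P}_\theta : S(P) \ge -c_1\}$, which by the properties listed above is compact and sequentially compact. Passing to a subsequence, $P_{n_j} \to P_\star$ for some $P_\star \in \mathcal{P}_\theta$; since $\mathcal{A}$ is closed, $P_\star \in \mathcal{A}$, and by lower semi-continuity $F(P_\star) \le \liminf_j F(P_{n_j}) = c_0$, so $F(P_\star) = c_0$ and $P_\star$ is the desired minimizer.

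I do not expect a genuine obstacle here: this is the textbook direct method, and the two inputs that need a little care are precisely the two facts emphasized in this section. The first is the lower semi-continuity of $F$, which goes through because $U$ and $-\beta^{-1} S$ both take values in $\R \cup \{+\infty\}$, so the problematic $\infty - \infty$ never arises. The second is the coercivity needed to trap a minimizing sequence in a fixed compact set, which is obtained by combining the superstability lower bound on the energy density with the compactness of the entropy superlevel sets $\{S \ge -c\}$; the constraints defining $\mathcal{A}$ are used only through the continuity of $\rho$ and of the $\rho_k$, to guarantee that the limit point stays in $\mathcal{A}$.
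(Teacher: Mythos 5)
Your argument is the same direct method the paper uses: bound a minimizing sequence in a superlevel set $\{S\ge -c\}$ via superstability, extract a convergent subsequence by the sequential compactness of that set, keep the limit in $\mathcal{A}$ by continuity of $\rho$ and the $\rho_k$, and conclude by lower semi-continuity of $U$ and $-S$. The proof is correct and matches the paper's, just spelled out a bit more explicitly (e.g.\ the trivial case $\inf_{\mathcal{A}}F=\infty$ and the closedness of $\mathcal{A}$).
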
	

\begin{proof}
	If $U(P) -\beta^{-1} S(P) =\infty$ for every $P \in \mathcal{A}$, there is nothing to show. If $U(P)-\beta^{-1} S(P)<\infty$ for some $P \in \mathcal{A}$, let 	
	$(P_n)$ be a minimizing sequence. The sequence  $(U(P_n)- \beta^{-1} S(P_n))_{n\in\N}$ is bounded from above and, because $U(P)$ is bounded from below, $S(P_n)$ is bounded from below too. Since the superlevel sets $\{S\geq -c\}$ are $\tau_\mathcal{L}$-sequentially compact, there is a subsequence $P_{n_j}$ converging to some $P\in\mathcal{P}_\theta$. The continuity of the maps $\rho(\cdot)$ and $\rho_k(\cdot)$ ensures that $P \in \mathcal{A}$, and the lower semi-continuity of $U$ and $-S$ shows that $P$ is a minimizer. 
\end{proof}

\section{Proof of Theorem \ref{thm:var}} \label{sec:var}

The main idea for the proof of Theorem \ref{thm:var} is to apply a large deviations principle for the stationary empirical field proven in \cite{georgii94,georgii-zessin93} and the contraction principle \cite[Section 4.2.1]{dembo-zeitouni-book}. Two complications stand in our way. First, the large deviations principle in \cite{georgii94,georgii-zessin93} was shown in the grand-canonical rather than the canonical ensemble. Second, the cluster size densities can only be expressed as functions of the stationary empirical field if we modify their definition and, loosely speaking, define them with periodic boundary conditions; this yields a modified variable $\vect{\rho}_\Lambda^\per$. 
In order to circumvent these difficulties, we proceed as follows: 
\begin{itemize}
	\item We show first that the large deviations principle in the canonical ensemble for $\vect{\rho}_\Lambda$  implies a large deviations principle in the grand-canonical ensemble (Lemma \ref{lem:gc1}). 
	\item We apply the contraction principle and show that  $\vect{\rho}_\Lambda^\per$ satisfies a large deviations principle with convex rate function (Lemma \ref{lem:contraction}).
	\item Next we show that (truncations of) $\vect{\rho}_\Lambda$ and $\vect{\rho}_\Lambda^\per$, in the grand-canonical ensemble, are \emph{exponentially equivalent} \cite[Section 4.2.2]{dembo-zeitouni-book}; this follows from Ruelle's superstability bounds \cite{ruelle70}. As a consequence, the grand-canonical rate functions for $\vect{\rho}_\Lambda$ and $\vect{\rho}_\Lambda^\per$ must be equal (Lemma \ref{lem:bc}). 
	\item Taking Legendre transforms, we deduce the desired identity  for the canonical rate function (Lemma \ref{lem:legendre}). 
\end{itemize}	
	
For the purpose of this section it is most convenient to work with measures that are not normalized and to suppress the $\beta$-dependence in the notation.  Let 
\begin{equation*}
	Q_{N,\Lambda}^\can(A) := \frac{1}{N!} \int_{A} \exp(- \beta U(x_1,\ldots,x_N)) \dd x_1 \cdots \dd x_N
\end{equation*} 
be a measure on $\Lambda^N$ with total mass $Z_\Lambda(\beta,N)$.  The image of $Q_{N,\Lambda}^\can$ under  the map $\Lambda^N\to\Omega$, $(x_1,\ldots,x_N)\mapsto \{x_1,\ldots,x_N\}$ is a measure on $(\Omega,\mathcal{F})$, for which we use the same letter $Q_{N,\Lambda}^\can$. Furthermore define
\begin{equation*} 
	Q_{\mu,\Lambda} = \delta_\emptyset + \sum_{N=1}^\infty z^N  Q_{N,\Lambda}^\can,\quad z = \exp(\beta \mu).
	\end{equation*} 
We use the same letter for the measure on $\Omega$ and the measure on disjoint unions $\dot \cup_{N\geq 0} \Lambda^N$. $\Lambda^0$ is a dummy space corresponding to $\omega = \{\emptyset\}$: the event that there is no point at all has $Q_{\mu,\Lambda}$-measure $1$.  Remember that  $\R_+^\N$ is equipped with the product topology and the  corresponding Borel $\sigma$-algebra. 

\begin{lemma}\label{lem:gc1}
	Under $Q_{\mu,\Lambda}$ as $|\Lambda|\to \infty$, $N\to \infty$ at fixed $\beta >0$ and $\mu \in \R$,  
	the cluster size distribution $\vect{\rho}_\Lambda:= (\rho_{k,\Lambda})_{k \in \N}$ satisfies a large deviations principle with speed $\beta|\Lambda|$ and rate function 
		\begin{equation} \label{eq:jvar}
				J_{\beta,\mu}\bigl( (\rho_k)_k \bigr) = \inf_{\rho> 0} \Bigl( f\bigl(\beta,\rho, (\rho_k)_k\bigr)	
							- \mu \rho\Bigr).
		\end{equation}
\end{lemma}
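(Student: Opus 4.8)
The plan is to deduce the grand-canonical large deviations principle from the canonical one via a Laplace/Varadhan-type argument, treating the particle number $N$ as an additional quantity over which one sums. Recall that by hypothesis, for each fixed $\rho \in (0,\rho_\mathrm{cp})$, under $\P_{\beta,N,\Lambda}$ along the thermodynamic limit $N/|\Lambda| \to \rho$, the variable $\vect{\rho}_\Lambda$ satisfies a large deviations principle with speed $\beta|\Lambda|$ and good rate function $f(\beta,\rho,\cdot) - f(\beta,\rho)$, and $Z_\Lambda(\beta,N) = \exp(-\beta|\Lambda| f(\beta,\rho)(1+o(1)))$. The measure $Q_{\mu,\Lambda}$ decomposes as $\sum_{N\geq 0} z^N Q_{N,\Lambda}^\can$, and $z^N Q_{N,\Lambda}^\can(\Omega) = \exp\bigl(\beta|\Lambda|(\mu \rho_N - f(\beta,\rho_N)) + o(\beta|\Lambda|)\bigr)$ with $\rho_N := N/|\Lambda|$. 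So heuristically the grand-canonical weight of a configuration near a given cluster profile $(\rho_k)$ and particle density $\rho$ behaves like $\exp\bigl(-\beta|\Lambda|(f(\beta,\rho,(\rho_k)) - \mu\rho)\bigr)$, and optimizing over the free parameter $\rho$ gives exactly $J_{\beta,\mu}((\rho_k)_k)$ in \eqref{eq:jvar}.

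To make this rigorous I would proceed as follows. First, establish exponential tightness of $\vect{\rho}_\Lambda$ under $Q_{\mu,\Lambda}$: since $\sum_k k\rho_{k,\Lambda}(\omega) = N_\Lambda(\omega)/|\Lambda|$, a uniform superstability bound (Ruelle \cite{ruelle70}) controls $Q_{\mu,\Lambda}(N_\Lambda \geq a|\Lambda|)$ by $\exp(-\beta|\Lambda|(\mathrm{const}\, a^2 - \mathrm{const}\, a))$, which simultaneously forces the total mass $Q_{\mu,\Lambda}(\Omega)$ to be $\exp(O(\beta|\Lambda|))$ and gives compactness of the relevant sublevel sets in $\R_+^\N$ with the product topology. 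Second, prove the large deviations upper bound for closed sets $F \subset \R_+^\N$: split $Q_{\mu,\Lambda}(\vect{\rho}_\Lambda \in F) = \sum_N z^N Q_{N,\Lambda}^\can(\vect{\rho}_\Lambda \in F)$, bound the sum by (number of terms, at most $\mathrm{const}\,|\Lambda|$, after truncating large $N$ via tightness) times the maximal term, apply the canonical upper bound to each term along $N/|\Lambda| \to \rho$, and conclude via a standard argument that $\limsup \frac{1}{\beta|\Lambda|}\log Q_{\mu,\Lambda}(\vect{\rho}_\Lambda \in F) \leq -\inf_{(\rho_k)\in F} \inf_{\rho}(f(\beta,\rho,(\rho_k)) - \mu\rho)$, using lower semicontinuity and compact sublevel sets of $f(\beta,\rho,\cdot)$ plus joint control in $\rho$. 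Third, prove the lower bound for open sets $G$: fix $(\rho_k) \in G$ and $\rho$ nearly achieving the infimum in \eqref{eq:jvar}; restrict the sum to $N$ with $N/|\Lambda| \to \rho$, keep a single well-chosen term $z^N Q_{N,\Lambda}^\can(\vect{\rho}_\Lambda \in G)$, and apply the canonical lower bound together with $Z_\Lambda(\beta,N) \to \exp(-\beta|\Lambda|f(\beta,\rho))$. Fourth, identify the rate function: the function $J_{\beta,\mu}$ defined in \eqref{eq:jvar} is the infimum of a jointly nice function, hence lower semicontinuous with compact sublevel sets (using that $f(\beta,\cdot,\cdot)$ is convex lsc and $\mu\rho$ grows slower than the superstability bound forces $f(\beta,\rho,\cdot)$ to grow in $\rho$); this makes it a good rate function and the LDP well posed.

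The main obstacle I anticipate is the bookkeeping of the double limit $N \to \infty$, $|\Lambda| \to \infty$ at fixed $\mu$ (rather than fixed $\rho$): the canonical LDP in \cite{jkm11} is stated along a fixed-density thermodynamic limit, so one must carefully discretize the range of admissible densities $\rho_N = N/|\Lambda|$, interpolate the canonical estimates uniformly over a compact $\rho$-interval, and control the endpoints — in particular the behavior near $\rho = 0$ (empty configuration, which contributes the $\delta_\emptyset$ term and the value $\rho = 0$, $(\rho_k) = 0$) and near $\rho_\mathrm{cp}$ (where $f(\beta,\rho) \to \infty$, so those terms are exponentially negligible and can be truncated away using exponential tightness). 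Handling this interpolation cleanly — essentially a Laplace-principle argument where the summation index scales like the speed — is the technical heart of the proof; everything else is a routine application of the canonical LDP and superstability bounds.
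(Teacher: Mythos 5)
Your plan matches the paper's proof in structure and all essential ideas: decompose $Q_{\mu,\Lambda}=\sum_N z^N Q_{N,\Lambda}^{\mathrm{can}}$, get the lower bound by keeping a single well-chosen $N$ and letting $N/|\Lambda|\to\rho$ before taking $\sup_\rho$, get the upper bound by truncating the sum (superstability for $N\gtrsim\rho_0|\Lambda|$, a separate argument for the $N=0$/$\delta_\emptyset$ term) and using a compactness/contradiction argument to make the canonical upper bound uniform over $\rho$ in a compact interval. The one point you leave vague that the paper handles explicitly is the degenerate case $\inf_{\mathcal A}J_{\beta,\mu}=\infty$, which the paper resolves via a supermultiplicativity argument showing $Q_{N,\Lambda}^{\mathrm{can}}(\mathcal A)=0$ for all $N,\Lambda$ — but this is a minor bookkeeping detail and your overall approach is the same as the paper's.
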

We recall that $f(\beta,\cdot,\cdot): \R_+\times \R_+^\N\to \R$ is a lower semi-continuous, convex function, defined in all of $\rho >0$ and not only $\rho \in(0,\rho_\mathrm{cp})$. When  $\rho>\rho_\mathrm{cp}$ or $\sum_{k+1}^\infty k \rho_k >\rho$,  it takes the value $\infty$ \cite{jkm11}. 

\begin{proof}[Proof of Lemma \ref{lem:gc1}] 
	We consider Eq. \eqref{eq:jvar} as the definition of a function $J_{\beta,\mu}$ and show that $J_{\beta,\mu}$ is a rate function for $(\rho_{k,\Lambda})$. Set $z:=\exp(\beta \mu)$. 
	
	\emph{Lower bound.}  Let $\mathcal{O}\subset \R_+^\N$ be an open set 
	For every $N$ and $\Lambda$, 
		\begin{equation*}
				Q_{\mu,\Lambda}( (\rho_{k,\Lambda})_k \in \mathcal{O}) \geq z^N Q_{N,\Lambda}^\can((\rho_k)_k \in \mathcal{O}).
			\end{equation*}
			Fix $\rho>0$ and apply the previous inequality to $N,\Lambda$ with $N\to \infty$, $|\Lambda| \to \infty$ such that $N/|\Lambda|\to \rho$. We get 
				\begin{equation*}
						\liminf \frac{1}{\beta |\Lambda|} \log Q_{\mu,\Lambda} ( (\rho_{k,\Lambda})_k \in \mathcal{O}) 
								\geq \mu \rho - \inf_{(\rho_k) \in \mathcal{O}} f(\beta,\rho,(\rho_k)). 
				\end{equation*}
				Since the inequality holds for every $\rho>0$, we can take the supremum over $\rho>0$ on the right-hand side, and to conclude note 
				\begin{align*}
					\sup_{\rho>0} \Bigl( \mu \rho -\inf_{(\rho_k) \in \mathcal{O}} f(\beta,\rho,(\rho_k)) \Bigr) 
					&  = \sup_{\rho>0} \sup_{(\rho_k) \in \mathcal{O}} \Bigl( \mu \rho - f(\beta,\rho, (\rho_k)) \Bigr) \\
					&  = \sup_{(\rho_k) \in \mathcal{O}} \sup_{\rho>0} \Bigl( \mu \rho - f(\beta,\rho, (\rho_k)) \Bigr) \\
					& = - \inf_{(\rho_k) \in \mathcal{O}} J_{\beta,\mu}((\rho_k)). 
				\end{align*}
				
		\emph{Upper bound.} 
			We use ideas explained in \cite[Section 3.4.5]{ruelle-book}. 
Let $\mathcal{A} \subset \R_+^\N$ be a closed set. 
			For every $\rho>0 $ as  $|\Lambda|\to \infty$ and $N/|\Lambda|\to \rho$, 
			\begin{equation} \label{eq:ls}
				\limsup \frac{1}{\beta |\Lambda|} \ln Q_{N,\Lambda} ^\can( (\rho_{k,\Lambda})_k \in \mathcal{A}) 
								\leq - \inf_{(\rho_k) \in \mathcal{A}} f(\beta,\rho,(\rho_k)). 
			\end{equation}	
			Let $b<\infty$ be the stability constant from Eq. \ref{eq:superstable}.
			When $N/|\Lambda|\to 0$, note that 
		$$Q_{N,\Lambda}^\can((\rho_{k,\Lambda})\in\mathcal{A}) \leq Z_\Lambda(\beta,N) 
			\leq \exp(\beta N b) \frac{|\Lambda|^N}{N!} $$
			from which we get 
			\begin{equation*}
				\limsup_{|\Lambda|\to \infty} \frac{1}{|\Lambda|} \log Q_{N,\Lambda}^\can((\rho_k)\in\mathcal{A}) 
				\leq  \limsup_{|\Lambda|\to \infty} \frac{1}{|\Lambda|} 
					\Bigl( \beta N b - N \log \frac{N}{|\Lambda|e}\Bigr)  = 0.
			\end{equation*} 	
			Thus Eq.~\eqref{eq:ls} holds for all $\rho \geq 0$, provided we read the right-hand side as $0$ when $\rho =0$.  Fix $\rho_0>0$. We claim that Eq. \eqref{eq:ls} holds uniformly in $\rho\in [0,\rho_0]$. More precisely,	for every $\epsilon>0$, there is a $\delta>0$ such that: for all $\rho \in [0,\rho_0]$ and all $N,\Lambda$ satisfying  $|\Lambda|\geq 1/\delta$ and $|N/|\Lambda| - \rho| \leq \delta$, we have 
			\begin{equation} \label{eq:ls-unif}
				 \frac{1}{\beta |\Lambda|} \log Q_{N,\Lambda}^\can( (\rho_{k,\Lambda})_k \in \mathcal{A}) 
								\leq \eps - \inf_{(\rho_k) \in \mathcal{A}} f(\beta,\rho,(\rho_k)). 
			\end{equation}			
			Indeed, if this was not the case, we could find  $\epsilon >0$, sequences $(N_j), (\Lambda_j)$ and a $\rho \in [0,\rho_0]$ such that $|\Lambda_j| \geq j$, $|N_j/|\Lambda_j| - \rho|\leq 1/j$ and
				\begin{equation*} 
				 \frac{1}{\beta |\Lambda_j|} \ln Q_{N_j,\Lambda_j} ^\can( (\rho_{k,\Lambda})_k \in \mathcal{A}) 
								\geq \eps - \inf_{(\rho_k) \in \mathcal{A}} f(\beta,\rho,(\rho_k)),
			\end{equation*}
			contradicting Eq. \eqref{eq:ls}; this proves the claim. Now because of the uniformity, for every $\epsilon>0$ and sufficiently large $|\Lambda|$, 
				\begin{align*}
					&\frac{1}{\beta|\Lambda|} \log \Bigl(\sum_{N=1}^{\lfloor \rho_0 |\Lambda|\rfloor} z^N Q_{N,\Lambda}\bigl( (\rho_{k,\Lambda}) \in \mathcal{A} \bigr) \Bigr) \\
					& \quad \leq \frac{\log (2+\rho_0|\Lambda|)}{\beta|\Lambda|} 
							+ \epsilon + \sup_N\Bigl( \mu \frac{N}{|\Lambda|} - \inf_{(\rho_k) \in \mathcal{A}} f(\beta,N/|\Lambda| ,(\rho_k)) \Bigr) \\
					& \quad \leq o(1) + \eps + \sup_{\rho>0} \Bigl( \mu \rho - \inf_{(\rho_k) \in \mathcal{A}} f(\beta,\rho, (\rho_k)) \Bigr) \\
					&\quad = o(1) + \eps - \inf_{ (\rho_k) \in \mathcal{A}} J_{\beta,\mu}((\rho_k)). 
				\end{align*}
	If $(\rho_k)=\vect{0}\notin \mathcal{A}$, the $\delta_\emptyset$ term corresponding, formally, to $N=0$, does not contribute to $Q_{\mu,\Lambda}((\rho_{k,\Lambda})\in\mathcal{A})$. If $\vect{0}\in \mathcal{A}$, the $\delta_\emptyset$-term contributes a term $1$ inside the logarithm, and we want to check that 
	$\lim |\Lambda|^{-1} \log 1 \leq - \inf_\mathcal{A} J_{\beta,\mu}$. 	
	To this aim note that 
	\begin{equation*}
		\inf_\mathcal{A} J_{\beta,\mu}((\rho_k)) \leq \inf_{\rho>0}\bigl( f(\beta,\rho,\vect{0}) - \mu \rho\bigr) \leq  \liminf_{\rho \to 0} \bigl( f(\beta,\rho, \vect{0}) - \mu \rho) =0.
	\end{equation*}
	The last equality is  shown by choosing a connected reference configuration $(x_1^0,\ldots,x_N^0)$ which is such that (i) for suitable $r>0$, every $\vect{x} \in \cup_{\pi \in \mathcal{S}_N} \times_{k=1}^N B(x_{\pi(k)}^0,r)$ is connected, (ii) on this set the energy is upper bounded by $CN$ for suitable $C>0$,  and (iii) the balls are disjoint.  Integrating in the neighborhood of $\vect{x}^0$ given by  $\times_{i=1}^N B(x_i,r)$ (and permutations of this set) yields the bound 
	\begin{equation*} 
		- \beta f(\beta,\rho,\vect{0}) 
			\geq - \beta \rho \log \Bigl( |B(0,r)|e^{1-\beta C} \Bigr). 
	\end{equation*} 
	The right-hand side goes to $0$ as $\rho \to 0$. Thus we need not worry about the  contribution from $N=0$.  
	
	 It remains to estimate the terms from $N\geq  \rho_0 |\Lambda|$. Choose $\rho_0$  large enough so that $z \exp(\beta b+1) / \rho_0 <1 /2$. Remember $N! \geq (N/e)^N$. Then 		
				\begin{align*}
					\sum_{N \geq \rho_0|\Lambda|} z^N Q_{N,\Lambda}^\can( (\rho_{k,\Lambda}) \in \mathcal{A}) & \leq \sum_{N \geq \rho_0|\Lambda|} z^N \frac{|\Lambda|^N}{N!} e^{\beta b N} \\
						&\leq \sum_{N \geq \rho_0|\Lambda|} \Bigl( \frac{z |\Lambda| \exp(\beta b+1)}{N} \Bigr)^N \leq 2 \exp\bigl( - \rho_0 |\Lambda| \ln 2\bigr).
				\end{align*}
		 If $\inf_\mathcal{A} J_{\beta,\mu} <\infty$, we can choose $\rho_0$ large enough so that $\rho_0 \log 2 > \beta \inf_{\mathcal{A}} J_{\beta,\mu}$, and we obtain 
			\begin{equation} \label{eq:ub}
					\limsup \frac{1}{\beta|\Lambda|} \log Q_{\mu,\Lambda} \bigl( (\rho_{k,\Lambda}) \in \mathcal{A} \bigr) \leq - \inf_{(\rho_k) \in \mathcal{A}} J_{\beta,\mu}\bigl( (\rho_k) \bigr). 
			\end{equation}
	If $\inf_\mathcal{A} J_{\beta,\mu}=\infty$, we have $f(\beta,\rho,(\rho_k))=\infty$ for all $\rho >0$ and $(\rho_k) \in \mathcal{A}$. But this implies that $Q_{N,\Lambda}^\can( (\rho_{k,\Lambda})_k \in \mathcal{A})=0$ for all $N,\Lambda$. Indeed, suppose by contradiction that there is a $N \in \N$ and a box $\Lambda =[0,L]^d$ such that $Q_{N,\Lambda}^\can ((\rho_{k,\Lambda})_k) \in \mathcal{A}) >0$. 
	For $n\in \N$, let $L_n:= n(L+R)$, $\Lambda_n:=[0,L_n]^d$ and $N_n:=n^d N$.  	
	Then 
	\begin{equation*} 
		Q_{N_n,\Lambda_n}^\can\bigl( (\rho_{k,\Lambda}) \in \mathcal{A} )\bigr) \geq 
		\Bigl(  Q_{N,\Lambda}^\can( \bigl(\rho_{k,\Lambda}) \in \mathcal{A} )\bigr) \Bigr)^{n^d} 
	\end{equation*} 
 	and 
 	\begin{equation*} 
 	 	- \beta \inf_{(\rho_k) \in \mathcal{A}}  f(\beta,\rho,(\rho_k))
 	 			\geq -  \frac{1}{(L+R)^d} \log Q_{N,\Lambda}^\can\bigl( (\rho_{k,\Lambda}) \in \mathcal{A} \bigr) >- \infty,
 	 \end{equation*} 
 	 contradiction. Thus $Q_{N,\Lambda}^\can( (\rho_{k,\Lambda})_k \in \mathcal{A})=0$ 
 	 for all $N$ and $\Lambda$, whence $Q_{\mu,\Lambda}( (\rho_{k,\Lambda})\in \mathcal{A}) =0$; Eq. \eqref{eq:ub} holds trivially. 
\end{proof}

Next we define the stationary empirical field and the modified cluster size distribution $\vect{\rho}_\Lambda^\per$. 
For $\omega \in \Omega$ and $\Lambda = [0,L]^d$, let $\omega_\Lambda^\per:=\cup_{\vect{k\in \Z^d}} \theta_{L\vect{k}} (\omega \cap \Lambda)$ be the periodic continuation of $\omega \cap \Lambda$. 
The \emph{translation invariant empirical field} is 
	\begin{equation*}
			R_{\Lambda,\omega} = \frac{1}{|\Lambda|} \int_{\Lambda} \delta_{\theta_x \omega^\per_\Lambda} \dd x.
		\end{equation*} 
	The Palm measure of $R_{\Lambda,\omega}$ is 
		\begin{equation*}
				R_{\Lambda,\omega}^\circ = \frac{1}{|\Lambda|} \sum_{x \in \omega\cap \Lambda} \delta_{\theta_x \omega_\Lambda^\per}.
		\end{equation*}
\begin{remark}
		The stationary empirical field associates with every configuration $\omega$ a probability measure supported on configurations with the same relative coordinates as $\omega \cap \Lambda$, but randomized center of mass.
		For example, in dimension $d=1$, if $\omega$ consists of two particles $0,\eps$, then $R_{\Lambda,\omega}$ describes configurations $\{x,x+\eps\}$ with $x$ having uniform Lebesgue density $1/L$. 
\end{remark} 
We define $k$-cluster densities with periodic boundary conditions as
\begin{equation*}
	k \rho_{k,\Lambda}^\per(\omega):=\int_\Omega \mathbf{1}\bigl( |\mathcal{C}_{\tilde \omega}(0)|=k \bigr)R_{\Lambda,\omega}^\circ(\dd \tilde \omega)
	= \frac{1}{|\Lambda|}\Bigl|\Bigl\lbrace x\in \omega \cap \Lambda \,\Big|\, |\mathcal{C}_{\omega_\Lambda^\per}(x)|=k \Bigr\rbrace \Bigr|.
\end{equation*}
We have $\rho_{k,\Lambda}^\per(\omega) = \rho_k(R_{\Lambda,\omega})$; compare with  Eq.~\eqref{eq:rhokp}. Set $\vect{\rho}_\Lambda(\omega):= (( \rho_{k,\Lambda}^\per(\omega))_{k\in \N}$. 

\begin{lemma} \label{lem:contraction}
	 Under $Q_{\mu,\Lambda}$, $\vect{\rho}_{\Lambda}^\per$ satisfies a large deviations principle with rate function 
		\begin{equation*}
			J_{\beta,\mu}^\per\bigl( (\rho_k) \bigr) = \inf \{ U(P) - \beta^{-1} S(P) - \mu \rho(P) \mid P \in \mathcal{P}_\theta,\, \forall k \in \N:\, \rho_k(P)= \rho_k \}.
		\end{equation*}
\end{lemma}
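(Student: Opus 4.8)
The plan is to obtain Lemma~\ref{lem:contraction} by applying the contraction principle to the large deviations principle for the stationary empirical field $R_{\Lambda,\omega}$ under $Q_{\mu,\Lambda}$, which is proven in \cite{georgii94,georgii-zessin93}. Concretely, that reference gives: under $Q_{\mu,\Lambda}$ (i.e.\ the grand-canonical ensemble at chemical potential $\mu$ with periodic boundary conditions), $R_{\Lambda,\omega}$ satisfies an LDP on $\mathcal{P}_\theta$ (with the topology $\tau_\mathcal{L}$) with speed $\beta|\Lambda|$ and rate function $P \mapsto U(P) - \beta^{-1} S(P) - \mu \rho(P) + p(\beta,\mu)$, up to the additive pressure constant; since we are using unnormalized measures $Q_{\mu,\Lambda}$ the pressure constant is absorbed and the effective rate function is just $P \mapsto U(P) - \beta^{-1} S(P) - \mu\rho(P)$. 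First I would state this input LDP carefully, checking that our Assumptions~\ref{ass:basic} and~\ref{ass:superstable} place the potential in the class covered by \cite{georgii94}.

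Next I would identify the map to which the contraction principle is applied. We have the identity $\rho_{k,\Lambda}^\per(\omega) = \rho_k(R_{\Lambda,\omega})$ recorded just above the lemma, so $\vect{\rho}_\Lambda^\per(\omega) = \Phi(R_{\Lambda,\omega})$ where $\Phi: \mathcal{P}_\theta \to \R_+^\N$ is the map $P \mapsto (\rho_k(P))_{k\in\N}$. By the Lemma in Section~\ref{sec:topo}, each coordinate $P \mapsto \rho_k(P)$ is $\tau_\mathcal{L}$-continuous, and since $\R_+^\N$ carries the product topology, $\Phi$ is continuous. The contraction principle \cite[Section 4.2.1]{dembo-zeitouni-book} then immediately yields an LDP for $\vect{\rho}_\Lambda^\per$ under $Q_{\mu,\Lambda}$ with speed $\beta|\Lambda|$ and rate function
\begin{equation*}
	J_{\beta,\mu}^\per\bigl((\rho_k)\bigr) = \inf\bigl\{ U(P) - \beta^{-1}S(P) - \mu\rho(P) \mid P \in \mathcal{P}_\theta,\ \Phi(P) = (\rho_k) \bigr\},
\end{equation*}
which is exactly the claimed formula since $\Phi(P) = (\rho_k)$ means $\rho_k(P) = \rho_k$ for all $k$. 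The infimum over the empty set is $+\infty$ as usual, consistent with $J_{\beta,\mu}^\per$ being a genuine (lower semicontinuous) rate function.

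I expect the main subtlety to be the precise matching of hypotheses and normalizations rather than any deep new argument. Specifically: (i) confirming that the LDP of \cite{georgii94,georgii-zessin93} applies with periodic boundary conditions and at the level of the unnormalized reference measures $Q_{\mu,\Lambda}$ (so that the $p(\beta,\mu)$ term is the right additive constant to drop), and (ii) verifying that the contraction principle can be invoked in the form needed --- this requires either that $R_{\Lambda,\omega}$ satisfies the LDP with good (compact sublevel set) rate function, which follows from the stated compactness of the superlevel sets $\{S \geq -c\}$ together with lower semicontinuity of $U$ and upper semicontinuity of $S$, so that $\{U - \beta^{-1}S - \mu\rho \leq c\}$ is compact. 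One small point worth a sentence: one should note that $\Phi$ need not be surjective and its range need not be closed, but this causes no problem --- the contraction principle delivers the rate function as an infimum over fibres of $\Phi$ regardless, with the convention $\inf\emptyset = \infty$. I would close by remarking that $J_{\beta,\mu}^\per$ is automatically convex: it is the contraction of the convex functional $P \mapsto U(P) - \beta^{-1}S(P) - \mu\rho(P)$ (convex because $U$ and $\rho$ are affine and $-S$ is convex) along the affine map $\Phi$, so infima over fibres of $\Phi$ preserve convexity; this convexity will be used in the subsequent Legendre-transform step (Lemma~\ref{lem:legendre}).
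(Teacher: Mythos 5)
Your proposal is correct and takes essentially the same route as the paper: invoke the LDP for the stationary empirical field under $Q_{\mu,\Lambda}$ from \cite{georgii94} with good rate function $P\mapsto U(P)-\beta^{-1}S(P)-\mu\rho(P)$, note that $\vect{\rho}_\Lambda^\per(\omega)=\bigl(\rho_k(R_{\Lambda,\omega})\bigr)_k$ and that $P\mapsto(\rho_k(P))_{k\in\N}$ is $\tau_\mathcal{L}$-continuous into the product space, and apply the contraction principle. One small inaccuracy in your parenthetical: $Q_{\mu,\Lambda}$ as defined in the paper carries free (empty), not periodic, boundary conditions --- the periodicity enters only through the periodized configuration $\omega_\Lambda^\per$ in the definition of the stationary empirical field, not through the reference measure.
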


\begin{proof}
		Under $Q_{\mu,\Lambda}$, the stationary empirical field satisfies a large deviations principle with rate function $U(P) - \beta^{-1} S(P)  - \mu \rho(P)$ \cite{georgii94}, with compact sublevel sets. Since $(\rho_{k,\Lambda}^\per(\omega))_{k\in \N}$ can be written as a continuous function of the stationary empirical field, we can apply the contraction principle \cite[Section 4.2.1]{dembo-zeitouni-book}	and the result follows. 
\end{proof}

\begin{lemma}\label{lem:bc}
	For every $\beta >0$ and $\mu \in \R$, we have
	$J_{\beta,\mu} = J_{\beta,\mu}^\per$. 
\end{lemma}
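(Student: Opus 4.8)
The plan is to show that $\vect{\rho}_\Lambda$ and $\vect{\rho}_\Lambda^\per$ are exponentially equivalent under $Q_{\mu,\Lambda}$, after which the equality of rate functions $J_{\beta,\mu} = J_{\beta,\mu}^\per$ follows from the standard fact that exponentially equivalent families obey the same large deviations principle \cite[Theorem 4.2.13]{dembo-zeitouni-book}, together with Lemmas \ref{lem:gc1} and \ref{lem:contraction}. Strictly speaking, exponential equivalence in the product topology on $\R_+^\N$ is not automatic, so I would instead work coordinate by coordinate — or, what is cleaner, on finitely many coordinates at a time — and use that the product topology is generated by such projections; the rate functions agree on cylinder sets and hence everywhere. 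So the real content is a quantitative comparison of $\rho_{k,\Lambda}(\omega)$ and $\rho_{k,\Lambda}^\per(\omega)$ valid with $Q_{\mu,\Lambda}$-probability exponentially close to $1$.

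The key observation is geometric: the open boundary cluster count $\rho_{k,\Lambda}(\omega)$ and the periodic one $\rho_{k,\Lambda}^\per(\omega)$ can only differ because of particles sitting within distance $R$ of the boundary $\partial \Lambda$ — those are the only particles whose connectivity is affected by wrapping $\Lambda$ into a torus. Writing $\partial_R \Lambda := \{x \in \Lambda \mid \dist(x,\partial \Lambda) \leq R\}$ for the boundary layer, I would show the deterministic bound
\begin{equation*}
	\sum_{k=1}^\infty k\, \bigl| \rho_{k,\Lambda}(\omega) - \rho_{k,\Lambda}^\per(\omega) \bigr| \leq \frac{2}{|\Lambda|} N_{\partial_R\Lambda}(\omega),
\end{equation*}
since relabelling, merging or splitting the clusters meeting the boundary layer changes the cluster-size vector by a total variation controlled by the number of particles in that layer. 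Because $|\partial_R\Lambda| = O(L^{d-1} R) = o(|\Lambda|)$, the right-hand side is small as soon as $N_{\partial_R\Lambda}(\omega)$ is not atypically large. Ruelle's superstability estimates \cite{ruelle70} give exactly this: under $Q_{\mu,\Lambda}$ (equivalently under $P_{\beta,\mu,\Lambda}$, up to the harmless normalization), the number of particles in any region $B$ satisfies a Gaussian-type tail, so that for every $\delta > 0$,
\begin{equation*}
	\limsup_{|\Lambda| \to \infty} \frac{1}{\beta|\Lambda|} \log Q_{\mu,\Lambda}\bigl( N_{\partial_R\Lambda}(\omega) \geq \delta |\Lambda| \bigr) = -\infty,
\end{equation*}
because the available volume $|\partial_R\Lambda|$ is of lower order than $|\Lambda|$, so even a density-order number of particles in the layer is super-exponentially unlikely on the scale $\beta|\Lambda|$. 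Combining this with the deterministic bound yields, for every $\eps > 0$,
\begin{equation*}
	\limsup_{|\Lambda|\to\infty} \frac{1}{\beta|\Lambda|} \log Q_{\mu,\Lambda}\Bigl( \sum_{k=1}^\infty k\,|\rho_{k,\Lambda} - \rho_{k,\Lambda}^\per| > \eps \Bigr) = -\infty,
\end{equation*}
which is exactly exponential equivalence (and a fortiori exponential equivalence of each finite-coordinate projection, since $|\rho_{k,\Lambda} - \rho_{k,\Lambda}^\per| \leq k^{-1}\sum_j j|\rho_{j,\Lambda}-\rho_{j,\Lambda}^\per|$).

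I expect the main obstacle to be a clean bookkeeping of the deterministic comparison: one has to argue carefully that passing from open to periodic boundary conditions only affects clusters touching $\partial_R\Lambda$, and then bound how much the $\ell^1$-weighted cluster-size vector can move when an arbitrary set of clusters, involving at most $N_{\partial_R\Lambda}(\omega)$ particles, is rearranged. The crude bound above (losing a factor $2$ and treating every affected particle as if its whole cluster were destroyed) is wasteful but suffices, and trying to be sharper is unnecessary. A secondary technical point is the transfer of Ruelle's bound — stated for the grand-canonical Gibbs measure $P_{\beta,\mu,\Lambda}$ — to the unnormalized $Q_{\mu,\Lambda}$; this is immediate since $Q_{\mu,\Lambda} = \Xi_\Lambda(\beta,\mu)\, P_{\beta,\mu,\Lambda}$ and $\frac{1}{\beta|\Lambda|}\log\Xi_\Lambda(\beta,\mu) \to p(\beta,\mu)$ is finite, so the exponential estimate is unchanged. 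Everything else is an application of \cite[Section 4.2.2]{dembo-zeitouni-book}.
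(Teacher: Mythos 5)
There is a genuine gap: your ``key observation'' --- that only particles within distance $R$ of $\partial\Lambda$ can have their cluster size change when passing from free to periodic boundary conditions --- is false, and so is the deterministic $\ell^1$-bound built on it. Adding a single torus-wrap edge between two boundary particles can merge two large clusters, changing the cluster size of \emph{every} particle in both of them, no matter how far those particles sit from the boundary. Concretely, let $\omega\cap\Lambda$ consist of two chains of $N/2$ particles each, one ending near the left face and the other near the right face, at matching heights so that the two endpoints become adjacent on the torus. Then $N_{\partial_R\Lambda}(\omega)=2$, but with free boundary conditions all $N$ particles lie in $(N/2)$-clusters while with periodic boundary conditions they all lie in one $N$-cluster, so
\begin{equation*}
  \sum_{k\geq 1} k\bigl|\rho_{k,\Lambda}(\omega)-\rho^\per_{k,\Lambda}(\omega)\bigr|
  = \frac{N/2}{|\Lambda|}\cdot 2 + \frac{N}{|\Lambda|}\cdot 1 = \frac{2N}{|\Lambda|},
\end{equation*}
which is of order $\rho$ when $N\sim\rho|\Lambda|$, whereas your proposed bound $2N_{\partial_R\Lambda}/|\Lambda| = 4/|\Lambda|$ tends to zero. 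No constant in place of $2$ repairs this; the ratio is unbounded.

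The fix, which is what the paper does, is to work coordinate by coordinate with a \emph{$k$-dependent} boundary layer $\partial_{kR}\Lambda=\{x\in\Lambda:\dist(x,\partial\Lambda)\leq kR\}$. For a fixed $k$, if $x$ lies at distance $>kR$ from $\partial\Lambda$ and either $|\mathcal{C}_{\omega\cap\Lambda}(x)|=k$ or $|\mathcal{C}_{\omega^\per_\Lambda}(x)|=k$, then the whole cluster of $x$ (diameter $\leq(k-1)R$) is disjoint from $\partial_R\Lambda$, so no wrap edge touches it and $\mathbf{1}(|\mathcal{C}_{\omega\cap\Lambda}(x)|=k)=\mathbf{1}(|\mathcal{C}_{\omega^\per_\Lambda}(x)|=k)$. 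Hence $|k\rho_{k,\Lambda}(\omega)-k\rho^\per_{k,\Lambda}(\omega)|\leq N_{\partial_{kR}\Lambda}(\omega)/|\Lambda|$, with a layer whose volume is still $O(kL^{d-1})=o(|\Lambda|)$ for each fixed $k$. Your Ruelle-superstability tail estimate then applies essentially unchanged and gives exponential equivalence of the truncated vectors $(\rho_{1,\Lambda},\ldots,\rho_{K,\Lambda})$ and $(\rho^\per_{1,\Lambda},\ldots,\rho^\per_{K,\Lambda})$ for every fixed $K$; the passage to the full vector in $\R_+^\N$ is then handled by the Dawson--G\"artner theorem, which you should invoke explicitly rather than gesture at --- the coordinate-by-coordinate reduction you mention only works once the per-coordinate deterministic bound is the correct one.
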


\begin{proof}
	When $x\in \Lambda$ has distance $>kR$ to the boundary $\partial \Lambda$, we have $\mathcal{C}_{\omega_\Lambda}(x) = \mathcal{C}_{\omega_\Lambda^\per}(x)$. Set 
	$\partial_{kR}\Lambda:=\{x\in \Lambda\mid \dist(x,\partial\Lambda)\leq kR\}$. 
	Then 
	\begin{equation*}
		\bigl|k \rho_{k,\Lambda}(\omega) - k \rho_{k,\Lambda}^\per(\omega)\bigr|
			\leq \frac{|\omega \cap \partial_{kR} \Lambda|}{|\Lambda|}
	\end{equation*}
	and for every $\epsilon >0$, 
 	\begin{equation*}
		P_{\mu,\Lambda}
			\Bigl( \bigl| \rho_{k,\Lambda}(\omega) -  \rho_{k,\Lambda}^\per(\omega)\bigr|\geq \eps \Bigr) \leq 
			P_{\mu,\Lambda}\Bigl( N_{\partial_{k R}\Lambda} \geq k \eps |\Lambda| \Bigr). 
	\end{equation*}
	Here $P_{\mu,\Lambda}:= (\Xi_{\Lambda}(\beta,\mu))^{-1} Q_{\mu,\Lambda}$ is the grand-canonical probability measure. 
	Because of the superstability of $v$, we know  \cite{ruelle70} that for some suitable  $\xi >0$ such that all $n$-point correlation functions $\rho_\Lambda(x_1,\ldots,x_n)$ are bounded in absolute value by $\xi^n$; $\xi$ depends on $\beta$ and $\mu$ but is independent of $\Lambda$ or $n$.  Integrating, we get bounds for factorial moments as
	\begin{equation*}
			E_{\mu,\Lambda}
					 \Bigl[ N_{\partial_{kR} \Lambda}\bigl(N_{\partial_{kR} \Lambda} -1) \cdots 
					 		\bigl(N_{\partial_{kR} \Lambda} -m+1) \Bigr] \leq \bigl( \xi |\partial_{kR} \Lambda| \bigr)^m.
	\end{equation*} 	
	Let $G(z)=\sum_{n=0}^\infty P_{\mu,\Lambda}(N_{\partial_{kR}(\omega) \Lambda}=n) z^n$ be the probability generating function of $N_{\partial_{kR} \Lambda}(\omega)$. A Taylor expansion around $z=1$ yields  
	\begin{align*} 
			P_{\mu,\Lambda}(N_{\partial_{kR} \Lambda} (\omega)=n) 
					& = \frac{1}{n!} G^{(n)}(0) = \frac{1}{n!}  \frac{\dd^n}{\dd z^n} \sum_{k=0}^\infty \frac{G^{(k)}(1)}{k!} (z-1)^k \Big|_{z=0}\\
			& = \frac{1}{n!}  \sum_{k=n}^\infty \frac{G^{(k)}(1)}{(k-n)!} (-1)^{k-n} 
				\leq \frac{1}{n!} \sum_{k=n}^\infty  \frac{\bigl( \xi |\partial_{kR} \Lambda| \bigr)^k}{(k-n)!} \\
				&= \frac{\bigl( \xi |\partial_{kR} \Lambda|\bigr)^n}{n!} e^{ \xi |\partial_{kR} \Lambda|} \leq \Bigl( \frac{e\xi |\partial_{kR} \Lambda|}{n} \Bigr)^n e^{ \xi |\partial_{kR} \Lambda|}. 
	\end{align*}
	For $M\in \N$ such that $M>2 e \xi |\partial_{kR} \Lambda|$, we have 
	\begin{equation*} 
				 	P_{\mu,\Lambda}(N_{\partial_{kR} \Lambda}(\omega) \geq M) 
						 \leq 2 \Bigl( \frac{e\xi |\partial_{kR} \Lambda|}{M}  \Bigr)^M e^{\xi|\partial_{kR}\Lambda|}.
	\end{equation*}
	Applying the inequality to $M=k \eps |\Lambda|$ we obtain, for sufficiently large $|\Lambda|=L^d$, 
	\begin{equation*}
		 P_{\mu,\Lambda}\Bigl( \bigl| \rho_{k,\Lambda}(\omega) -  \rho_{k,\Lambda}^\per(\omega)\bigr|\geq \eps \Bigr)  
		 	\leq 2 \Bigl( \frac{ 2 d e \xi R}{L} \Bigr)^{k \eps L^d} e^{2 d \xi k R L^{d-1}}.
	\end{equation*}
	It follows that for every fixed $K\in \N$ and $\eps>0$, 
	\begin{equation*} 
		\lim_{|\Lambda| \to \infty} \frac{1}{|\Lambda| ^d} \log P_{\mu,\Lambda} \Bigl( \sum_{k=1}^K |\rho_{k,\Lambda} (\omega) - \rho_{k,\Lambda}^\per(\omega) \bigr| \geq \eps \Bigr) 
			= - \infty, 
	\end{equation*}  
	and the same identity holds with $P_{\mu,\Lambda}$ replaced with $Q_{\mu,\Lambda}$ (note $\Xi_\Lambda(\beta,\mu)\geq 1$). 
	As a consequence, the random variables $(\rho_{1,\Lambda},\ldots,\rho_{K,\Lambda})$ and 
	$(\rho_{1,\Lambda}^\per,\ldots,\rho_{K,\Lambda}^\per)$ are \emph{exponentially equivalent} in the sense of \cite[Section 4.2.1]{dembo-zeitouni-book}. The contraction principle together with Lemma \ref{lem:contraction}  and results from \cite{jkm11} tell us that they satisfy large deviations principles; because of exponential equivalence, their rate functions must be equal \cite[Theorem 4.2.3]{dembo-zeitouni-book}. The Dawson-G{\"a}rtner theorem \cite[Theorem 4.6.1]{dembo-zeitouni-book} allows us to recover the rate function of the full vectors ($k \in \N$) from the rate functions of the truncated vectors ($1\leq k \leq K$)  and it follows that $J_{\beta,\mu}= J_{\beta,\mu}^\per$. 
\end{proof}

\begin{lemma}\label{lem:legendre}
	For every $\beta>0$, $\rho \in (0,\rho_\mathrm{cp})$ and $(\rho_k) \in \R_+^\N$, we have 
	\begin{equation*}
			f(\beta,\rho,(\rho_k))= \inf \{U(P)-\beta^{-1}S(P) \mid P\in \mathcal{P}_\theta,\ 
				\rho(P) = \rho,\  \forall k \in \N:\, \rho_k(P) = \rho_k \}. 
	\end{equation*}
\end{lemma}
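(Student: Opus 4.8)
The plan is to deduce Lemma \ref{lem:legendre} from Lemma \ref{lem:bc} by taking Legendre transforms, so let me spell out the duality that makes this work. Both sides of the claimed identity are, essentially, partial Legendre transforms in the variable $\rho$. On the one hand, for each fixed $(\rho_k)$ the function $\rho \mapsto f(\beta,\rho,(\rho_k))$ is convex, lower semi-continuous, takes the value $+\infty$ for $\rho > \rho_\mathrm{cp}$ and for $\rho < \sum_k k\rho_k$, and is finite somewhere; hence it is determined by its Legendre transform $\mu \mapsto \inf_{\rho>0}(f(\beta,\rho,(\rho_k)) - \mu\rho) = J_{\beta,\mu}((\rho_k))$, by Fenchel--Moreau. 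On the other hand, define $g(\beta,\rho,(\rho_k))$ to be the right-hand side of the claimed identity, i.e. the infimum of $U(P) - \beta^{-1}S(P)$ over $P \in \mathcal{P}_\theta$ with $\rho(P) = \rho$ and $\rho_k(P) = \rho_k$ for all $k$. I first want to check that $\rho \mapsto g(\beta,\rho,(\rho_k))$ is also convex and lower semi-continuous in $\rho$, so that it too is recovered from its Legendre transform in $\mu$.

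First I would establish the convexity and lower semicontinuity of $g(\beta,\cdot,(\rho_k))$. Convexity follows because $\mathcal{P}_\theta$ is convex, $\rho(P)$ and $\rho_k(P)$ are affine in $P$, and $U(P) - \beta^{-1}S(P)$ is convex in $P$ (indeed $U$ is affine and $-S$ is affine, so the functional is affine); a standard argument shows that the infimum of an affine functional over the affine slices $\{\rho(P) = \rho,\ \rho_k(P) = \rho_k\}$ is convex in the slice parameters. Lower semicontinuity in $\rho$ (for fixed $(\rho_k)$) follows from the compactness of superlevel sets $\{S \geq -c\}$ together with the continuity of $P \mapsto \rho(P)$ and $P \mapsto \rho_k(P)$ and the lower semicontinuity of $U - \beta^{-1}S$, exactly as in the proof of Lemma \ref{lem:varmin}: take a sequence $\rho_n \to \rho$ with $g(\beta,\rho_n,(\rho_k))$ bounded, pick near-minimizers $P_n$, extract a convergent subsequence, and pass to the limit. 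I should also note $g(\beta,\rho,(\rho_k)) = +\infty$ when $\rho < \sum_k k \rho_k$ (impossible by the inequality $\sum_k k\rho_k(P) \le \rho(P)$) and that $g$ is bounded below (since $U(P) \geq -b^2/4a$ and $S(P) \leq S(Q_z)$-type bounds keep $-\beta^{-1}S$ from $-\infty$ on relevant sequences — more carefully, on the superlevel sets used).

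Next I would compute the Legendre transform of $g(\beta,\cdot,(\rho_k))$ and identify it with $J^\per_{\beta,\mu}$. Unravelling definitions,
\begin{align*}
\inf_{\rho > 0}\bigl( g(\beta,\rho,(\rho_k)) - \mu\rho \bigr)
&= \inf_{\rho>0}\ \inf\{ U(P) - \beta^{-1}S(P) - \mu\rho \mid P \in \mathcal{P}_\theta,\ \rho(P)=\rho,\ \rho_k(P)=\rho_k\ \forall k\} \\
&= \inf\{ U(P) - \beta^{-1}S(P) - \mu\rho(P) \mid P \in \mathcal{P}_\theta,\ \rho_k(P) = \rho_k\ \forall k\} \\
&= J^\per_{\beta,\mu}((\rho_k)),
\end{align*}
where the middle step just merges the two infima (the constraint $\rho(P)=\rho$ is absorbed). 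By Lemma \ref{lem:bc}, $J^\per_{\beta,\mu} = J_{\beta,\mu}$, and by Lemma \ref{lem:gc1} the latter equals $\inf_{\rho>0}(f(\beta,\rho,(\rho_k)) - \mu\rho)$, the Legendre transform of $f(\beta,\cdot,(\rho_k))$. So $f(\beta,\cdot,(\rho_k))$ and $g(\beta,\cdot,(\rho_k))$, both convex and lower semi-continuous and not identically $+\infty$, have the same Legendre transform; by Fenchel--Moreau (biconjugation) they coincide on their common domain, in particular for $\rho \in (0,\rho_\mathrm{cp})$. Finally, I would remark that at such $\rho$ the infimum defining $g$ is attained — this is precisely Lemma \ref{lem:varmin} with $\mathcal{A} \neq \emptyset$ guaranteed by the identity itself (the value being finite) — so the infimum in the statement is in fact a minimum, though the statement as written only claims the infimum, so this is a bonus.

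The main obstacle I anticipate is bookkeeping at the boundary of the domain and the case $\sum_k k\rho_k = \rho$ versus $<\rho$: one must make sure that $g(\beta,\cdot,(\rho_k))$ is not somewhere $+\infty$ that $f$ is finite or vice versa before invoking biconjugation — equivalently, that the two convex functions share the same effective domain, at least for $\rho$ strictly below $\rho_\mathrm{cp}$. The key input here is the ``Moreover'' formula from \cite{jkm11} giving $f(\beta,\rho) = \min\{f(\beta,\rho,(\rho_k)) : \sum_k k\rho_k \le \rho\}$, together with the a priori bound $\sum_k k\rho_k(P) \le \rho(P)$ for all $P \in \mathcal{P}_\theta$, which pins down where both functions are finite. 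A secondary technical point is verifying that $Q_{\mu,\Lambda}$ and the normalized grand-canonical measure $P_{\mu,\Lambda}$ give the same large deviations rate (the partition function $\Xi_\Lambda(\beta,\mu)$ contributes only a boundary term, already handled in the proof of Lemma \ref{lem:bc}), so that all three rate functions $J_{\beta,\mu}$, $J^\per_{\beta,\mu}$, and the Legendre transform of $g$ genuinely refer to the same object.
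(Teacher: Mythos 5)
Your proposal is correct and follows essentially the same route as the paper: define the variational expression as a function of $\rho$, observe (via Lemmas \ref{lem:gc1}, \ref{lem:contraction}, \ref{lem:bc}) that it and $\rho\mapsto f(\beta,\rho,(\rho_k))$ have the same Legendre transform in $\mu$, and invoke Fenchel--Moreau biconjugation for convex, lower semi-continuous functions. The only difference is that you spell out the convexity/lower-semicontinuity of the variational side and the identification of its Legendre transform with $J^\per_{\beta,\mu}$, details the paper handles by citation to \cite{georgii94,jkm11} and by leaving the merge of infima implicit.
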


\begin{proof}[Proof of Lemma \ref{lem:legendre}]
	Fix $\beta>0$ and $(\rho_k)_{k\in \N} \in \R_+^\N$.  For $\rho >0$, set 
	\begin{equation*} 
		h(\rho):= \inf\{ U(P) - \beta^{-1} S(P) \mid P \in \mathcal{P}_\theta, \, \rho(P) = \rho,\, 
			\forall k \in \N:\, \rho_k(P) =\rho_k \}
	\end{equation*}
	and $g(\rho):= f(\beta,\rho, (\rho_k))$ for $\rho \in (0,\rho_\mathrm{cp})$, $g(\rho):= \infty$ for $\rho < \rho_\mathrm{cp})$; the value of $g(\rho_\mathrm{cp})$ can be determined in such a way that $g$ is lower semi-continuous and convex in $(0,\infty)$. 
	Lemma  \ref{lem:bc} implies that $h$ and $g$ have the same Legendre transforms, 
	\begin{equation*} 
		\forall \mu \in \R:\quad 	\sup_{\rho>0 } (\rho \mu - h(\rho) ) = 	\sup_{\rho>0} (\rho \mu - g(\rho) ). 
	\end{equation*}
	Since the functions $h$ and $g$ are lower semi-continuous in $(0,\infty)$ and convex \cite{georgii94,jkm11}, it follows that $h(\rho) = g(\rho)$ for all $\rho \in (0,\rho_\mathrm{cp})$. 
\end{proof}

\begin{proof}[Proof of Theorem \ref{thm:var}] 
	Theorem \ref{thm:var} is an immediate consequene of  Lemmas \ref{lem:legendre} 
	and \ref{lem:varmin}.
\end{proof}

\section{Proof of Theorem \ref{thm:equiv} and Prop. \ref{prop:perco}}

\begin{proof}[Proof of Theorem \ref{thm:equiv}] 
	\emph{(2)$\Rightarrow$(1).}
	Suppose that there is a shift-invariant Gibbs measure $P$ with $\rho(P) = \rho$ and $\rho_k(P) = \rho_k$, for all $k$. By Theorem \ref{thm:var}, 
	$$\min f(\beta,\rho,\cdot) \leq f(\beta,\rho, (\rho_k)) \leq U(P) - \beta^{-1} S(P).$$
	On the other hand, let $(\rho'_k)$ with $\sum_{k=1}^\infty k \rho'_k \leq \rho$. By Theorem \ref{thm:var}, we find $P'\in \mathcal{P}_\theta$ such that $\rho(P')=\rho$, 
	$\rho_k(P')=\rho_k'$ for all $k$, and $f(\beta,\rho,(\rho'_k))= U(P')- \beta^{-1} S(P')$. 
	The Gibbs variational principle implies 
	\begin{equation*} 
		f(\beta,\rho,(\rho'_k)) - \mu \rho = U(P')- \beta^{-1} S(P') - \mu \rho(P') 
				\geq U(P) - \beta^{-1} S(P) - \mu \rho(P).
	\end{equation*}  
	Minimization over $(\rho'_k)$ yields (remember $\rho(P)=\rho$) 
	\begin{equation*} 
		\min f(\beta,\rho,\cdot) \geq U(P) - \beta^{-1} S(P).
	\end{equation*}		
	It follows that $\min f(\beta,\rho,\cdot) = f(\beta,\rho,(\rho_k)) = U(P) - \beta^{-1} S(P)$, and $(\rho_k)$ is a minimizer of $f(\beta,\rho,\cdot)$. 
	
	\emph{(1)$\Rightarrow$(2).}	Conversely, let $(\rho_k)$ be a minimizer of $f(\beta,\rho,\cdot)$. Thus $f(\beta,\rho,(\rho_k)) = f(\beta,\rho)$. By Theorem \ref{thm:var}, we can find $P\in \mathcal{P}_\theta$ such that $\rho(P) = \rho$, $\rho_k(P) = \rho_k$ for all $k$, 
	and $f(\beta,\rho) = f(\beta,\rho,(\rho_k))= U(P) - \beta^{-1} S(P)$. 
We know  that $f(\beta,\rho)= \sup_\mu (\mu \rho - p(\beta,\mu))$, and for every $\rho \in (0,\rho_\mathrm{cp})$, the supremum is actually attained. (This is because $\mu \mapsto \rho \mu - p(\beta,\mu)$ is concave and continuous in $\R$ with limits $-\infty$ as $\mu \to \pm \infty$; note $\lim_{\mu \to \infty} \partial_\mu p(\beta,\mu) = \rho_\mathrm{cp}$.) 
	Thus let $\mu\in \R$ such that $f(\beta,\rho) = \rho \mu - p(\beta,\mu)$. 
	We get $U(P)- \beta^{-1} S(P) - \mu \rho(P) = - p(\beta,\mu)$ and as a consequence of the Gibbs variational principle, we find that $P\in \mathcal{G}(\beta,\mu)$. 
\end{proof}

\begin{proof}[Proof of Prop. \ref{prop:perco}]
	 \emph{(2) $\Rightarrow$ (3)}  By definition of our probability space $\Omega$, for every configuration $\omega$ and every bounded set $A$ the number of particles in $A$ is finite. Therefore, if $\omega$ has a cluster with infinitely many particles, this cluster has infinite diameter. 
	 
	 \emph{(3) $\Rightarrow$ (2)} If in a configuration $\omega$ there is a cluster with infinite diameter, then this cluster must contain infinitely many particles -- otherwise it would have diameter bounded by $R$ times the cardinality of the cluster. 
	 
	 \emph{(1) $\Leftrightarrow$ (2)} 
	 Let  
		\begin{equation*}
		g(\omega):= \mathbf{1}(|\mathcal{C}_\omega(0)|=\infty) = 
			1 - \lim_{n\to \infty} \sum_{k=1}^n \mathbf{1}(|\mathcal{C}_\omega(0)=k). 
		\end{equation*}	
	The function is the limit of local, measurable functions and therefore measurable. 
	Eq. \eqref{eq:gpalm} gives 
	\begin{equation*}
		\int_\Omega g(\omega) P^\circ(\dd \omega) = \int_\Omega \sum_{x\in \omega \cap [0,1]^d} \mathbf{1}\bigl(|\mathcal{C}_\omega(x)|=\infty \bigr) P(\dd \omega).
	\end{equation*}
	Thus 
	\begin{equation} \label{eq:deficit-perco}
		\rho(P)- \sum_{k=1}^\infty k \rho_k(P) = \int_\Omega \Bigl| \bigl \{ x\in \omega \cap [0,1]^d \mid |\, |\mathcal{C}_\omega(x)|=\infty \bigr\} \Bigr| P(\dd \omega)
	\end{equation}
	is the expected number of particles in $[0,1]^d$ belonging to an infinite cluster. 
	If $\rho(P) - \sum_{k=1}^\infty k \rho_k(P)>0$, it follows right away that with positive probability, there is an infinite cluster. If $\rho(P)-\sum_{k=1}^\infty k \rho_k(P)=0$, using shift-invariance, we see that for every unit cube $C(\vect{k})$, $\vect{k} \in \Z^d$ (see p. \pageref{eq:superstable}) the probability that the cube intercepts an infinite cluster is zero; it follows that the probability that there is an infinite cluster vanishes. 
\end{proof}

\section{Percolation properties of Gibbs measures}  \label{sec:perco}

In this section we prove Theorems \ref{thm:perco-gc} and \ref{thm:perco-can} and Proposition \ref{prop:soft-trans}.  The proofs are a combination of results from \cite{muermann75, zessin08, pechersky-yambartsev09, jkm11},  what is known from cluster expansions \cite{ruelle-book,ruelle70}, and equivalence of ensembles as in \cite{georgii95}.

\subsection{Grand-canonical ensemble} 
For the proof of percolation, it is convenient to discretize space. 
Fix $\ell>0$. For $\vect{k} \in \Z^d$, let  $C(\vect{k}):= [k_1 \ell, (k_1+1) \ell)\times\cdots \times  [k_N \ell, (k_N+1)\ell)$. Let $\mathcal{C}$ be the collection of cubes $C(\vect{k})$, $\vect{k}\in \Z^d$. We say that two cubes are nearest neighbors
if their centers have Euclidean distance $\ell$. A collection $\mathcal{R} \subset \mathcal{C}$ of cubes is \emph{connected} if any two cubes in $\mathcal{R}$ can be joined by a path $(C_1,\ldots,C_n)$ of cubes in $\mathcal{R}$ such that  for every $j$, the cubes $C_{j}$ and $C_{j+1}$ are nearest neighbors. The following lemma is a variant of well-known contour arguments \cite[Section 5.3]{ruelle-book}. 

\begin{lemma}\label{lem:cells}  
	Let $P$ be a  probability measure on $\{0,1\}^\mathcal{C}$. There is a  constant $\alpha_d>0$ such that the following holds: if for some $\alpha >\alpha_d$, some  $A>0$, and all  connected subsets $\mathcal{R}\subset \mathcal{C}$, 
	\begin{equation*}
		P( \forall C\in \mathcal{R}: \omega_C = 0) \leq A \exp(-\alpha |\mathcal{R}|),
	\end{equation*}
	then $P$-almost surely, there is an infinite connected set $\mathcal{W}\subset \mathcal{C}$ such that $\omega_C =1$ for all $C\in \mathcal{W}$. 
\end{lemma}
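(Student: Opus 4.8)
The plan is to run a Peierls-type contour argument. Fix the parameter $\alpha>\alpha_d$ and constant $A$ from the hypothesis. Call a cube $C\in\mathcal{C}$ \emph{closed} if $\omega_C=0$ and \emph{open} if $\omega_C=1$. The event that there is \emph{no} infinite open connected set is, by a standard topological fact about $\Z^d$-like lattices, contained in the event that the origin's cube is surrounded by a connected ``blocking'' set of closed cubes, or more precisely that there is a finite connected component of open cubes whose outer boundary is a connected set of closed cubes; equivalently, some cube is separated from infinity by a $*$-connected surface of closed cubes. The cleanest route is: for each $n$, the event $\{$the open cluster of the origin's cube is finite of some size, or the origin's cube is closed$\}$ forces the existence of a connected set $\mathcal{R}$ of closed cubes ``circling'' the origin, with $|\mathcal{R}|$ at least of order $n^{(d-1)/d}$ if the enclosed region has $n$ cubes. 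So first I would set up this combinatorial/geometric lemma: if no infinite open connected set passes through a fixed reference cube $C_0$, then for every $L$ there is a connected $\mathcal{R}\subset\mathcal{C}$ with $\mathcal{R}$ entirely closed, $\mathcal{R}$ contained in the annulus-type region between distance $L$ and $2L$ (say), and $|\mathcal{R}|\geq c_d L^{d-1}$.

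Next I would do the counting step. The number of connected subsets $\mathcal{R}\subset\mathcal{C}$ with $|\mathcal{R}|=m$ and containing a fixed cube is bounded by $(\kappa_d)^m$ for a dimension-dependent constant $\kappa_d$ (this is the standard bound on the number of lattice animals of given size through a fixed site, using that each cube has $3^d-1$ neighbors in the $*$-connected sense, or just the $2d$ nearest neighbors for the connectivity defined here). Then by the hypothesis and a union bound,
\begin{equation*}
	P(\exists\text{ closed connected }\mathcal{R}\ni C_0,\ |\mathcal{R}|=m)\leq (\kappa_d)^m\, A\,\e^{-\alpha m}.
\end{equation*}
Here is where I fix $\alpha_d:=\log\kappa_d$, so that for $\alpha>\alpha_d$ the geometric series $\sum_m (\kappa_d)^m A\e^{-\alpha m}=A\sum_m \e^{-(\alpha-\alpha_d)m}$ converges, and moreover its tail $\sum_{m\geq M}$ tends to $0$ as $M\to\infty$. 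Combining with the geometric lemma: the probability that $C_0$ lies in no infinite open connected set is bounded, for every $L$, by $\sum_{m\geq c_d L^{d-1}}(\kappa_d)^m A\e^{-\alpha m}\to 0$ as $L\to\infty$. Hence that probability is $0$. Since this holds for every reference cube $C_0$, a countable union over $C_0\in\mathcal{C}$ shows that $P$-almost surely every cube lies in some infinite open connected set — in particular there is one.

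A minor subtlety: for the geometric lemma one wants the surrounding closed set to be connected in the nearest-neighbor sense used in the statement (not $*$-connected), while the open component is also nearest-neighbor connected; to make the dual ``surface'' genuinely nearest-neighbor connected one typically thickens it or uses the standard duality between $*$-connected and $\Z^d$-connected contours. I would handle this by simply taking $\mathcal{R}$ to be the set of \emph{all} closed cubes within distance $3L$ of $C_0$ that are nearest-neighbor connected to some closed cube at distance exactly in $[L,2L]$ from $C_0$ along a closed path, and verifying that if no infinite open cluster meets $C_0$ then some such $\mathcal{R}$ of size $\gtrsim L^{d-1}$ must ``block'' — the precise bookkeeping is the one genuinely fiddly point, but it is entirely classical (Ruelle, Section 5.3). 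Everything else is a union bound and a convergent geometric series.

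The main obstacle, then, is not probabilistic but combinatorial: pinning down the geometric lemma that ``no infinite open path through $C_0$'' implies the existence of a \emph{connected} (in the prescribed adjacency) set of closed cubes of cardinality growing like $L^{d-1}$ and localized in a shell around $C_0$, with the correct notion of connectivity so that the hypothesis of the lemma applies directly. Once that is in hand, the choice $\alpha_d=\log\kappa_d$ (with $\kappa_d$ the lattice-animal growth constant for the adjacency in question) and a one-line summation finish the proof.
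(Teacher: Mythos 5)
Your overall strategy has a genuine gap, and it is not a fixable technicality. You attempt to show that for every fixed reference cube $C_0$, $P(C_0\text{ lies in no infinite open cluster})=0$, and then take a countable union. But this event has positive probability in general, and the hypothesis of the lemma does not exclude it: applying the assumed bound to $\mathcal{R}=\{C_0\}$ gives only $P(\omega_{C_0}=0)\leq A\exp(-\alpha)$, which is allowed to be strictly positive. Concretely, an i.i.d.\ Bernoulli measure with $P(\omega_C=0)=q\in(0,\exp(-\alpha)]$ satisfies the hypothesis, yet $P(\omega_{C_0}=0)=q>0$, so $C_0$ fails to be in any open cluster with positive probability. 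Correspondingly, your geometric lemma --- ``no infinite open cluster through $C_0$ implies, for \emph{every} $L$, a connected closed set of size $\gtrsim L^{d-1}$ localized in a shell at distance $\sim L$'' --- is false: take $\omega_{C_0}=0$ and all other cubes open; then $C_0$ meets no infinite open cluster, but the only closed cube is $C_0$ itself and no shell contains any closed cube at all, let alone a large connected one. The blocking set of empty cubes lives at the boundary of the finite open cluster of $C_0$ (at a distance set by that cluster's diameter), so it does not grow with $L$; your tail $\sum_{m\geq c_dL^{d-1}}$ never enters the picture, and there is no reason the relevant probability tends to $0$.

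The paper's proof sidesteps this by aiming at the correct, weaker conclusion: that $P$-almost surely \emph{some} infinite occupied connected set exists, not that a prescribed cube belongs to one. It works with contours $\Gamma$ (sets of $(d-1)$-faces enclosing $C_0$) and their interior boundaries $\partial_\mathrm{int}\Gamma$, which are connected sets of cubes of size $\geq c|\Gamma|$, so that the hypothesis applies. A Ruelle-type count of contours of size $n$ around $C_0$ together with Borel--Cantelli shows that a.s.\ only finitely many contours enclosing $C_0$ have an entirely empty interior boundary. One then picks a cube adjacent from the outside to the union of the interiors of these contours: that cube cannot be empty and cannot itself be enclosed by a further empty-boundary contour, hence it must lie in an infinite connected set of occupied cubes. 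This ``move to a neighboring cell'' step is exactly what your argument is missing: it converts ``a.s.\ finitely many blocking contours around $C_0$'' into ``a.s.\ an infinite occupied cluster exists'' without ever claiming that $C_0$ itself percolates. If you want to salvage your write-up, replace the false geometric lemma and the attempted union-over-$C_0$ by the Borel--Cantelli argument for contours around a single fixed cube followed by the adjacency step.
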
 
We refer to cubes $C$ with $\omega_C=0$ as \emph{empty}, and a cube with  $\omega_C=1$ as \emph{occupied}.

\begin{proof} [Proof of Lemma \ref{lem:cells}] 
	Let $\mathcal{F}$ be the collection of the $(d-1)$-dimensional closed faces of the cubes in $\mathcal{C}$.  For example, $[0,\ell]^{d-1}\times \{0\} \in \mathcal{F}$. 
	A set $\Gamma \subset \mathcal{F}$ is a \emph{contour} if 
	$\R^d \setminus \cup_{F\in \Gamma} F$ splits into exactly two connected components, one finite (\emph{inside}) and one infinite (\emph{outside}).  Let $\Int \Gamma\subset \mathcal{C}$ be the collection of cubes that are inside $\Gamma$, and $\partial_\mathrm{int} \Gamma \subset \Int \Gamma$ the cubes in the interior of $\Gamma$ that touch the contour (sharing a face, an edge or a corner), i.e., 
\begin{equation*} 
	\partial_\mathrm{int}\Gamma = \{ C \in \Int \Gamma \mid \bar C \cap \bigl(\bigcup_{F \in \Gamma} F\bigr) \neq \emptyset \}. 
\end{equation*}	
 Note that $\partial_\mathrm{int} \Gamma$ is connected and $|\partial_\mathrm{int} \Gamma|\geq c |\Gamma|$, for a suitable $\Gamma$-independent constant $c$. 
  
  Fix $C_0 \in \mathcal{C}$. The number of contours $\Gamma$ such that $C_0 \in\Int \Gamma$ and $|\Gamma|=n$ can be bounded by $c_1 n^{d/(d-1)} \exp( c_2 n)$, for some $d$-dependent constants $c_1,c_2>0$, see \cite[Section 5.3]{ruelle-book}. It follows that if $\alpha$ is sufficiently large, 
  \begin{equation*} 
  		\sum_{n=0}^\infty P\Bigl(\exists \Gamma: \ |\Gamma|=n,\ C_0\in \Int \Gamma,\ |\partial_\mathrm{int} \Gamma|\ \text{is empty} \Bigr) <\infty. 
  \end{equation*}
  The Borel-Cantelli lemma shows that $P$-almost surely, $C_0$ is enclosed in only finitely many contours with empty boundary $\partial_\mathrm{int} \Gamma$. Thus we can pick a cell adjacent from outside to the union of all such contours. This cell cannot be empty, and it cannot be surrounded by another contour with empty boundary. It follows that it must be a member of an infinite connected set of occupied cells. 
\end{proof}

\begin{proof}[Proof of (2) in Theorem \ref{thm:perco-gc}] 
	Pechersky and Yambartsev \cite{pechersky-yambartsev09} proved a similar statement in dimension $d=2$. We show that their proof can be adapted to $d\geq 2$. 
	Let $(r_0,r_1)$ be as in Assumption \ref{ass:basic}, $-m> \inf_{(r_0,r_1)} v =:-M$ and 
	$\tilde r_m\in (r_0,r_1)$ such that $v(\tilde r_m) \leq -m$. Since $v$ is continuous in $(r_0,r_1)$, we can find $\ell, \eps, \delta >0$  such that $\tilde r_m = \ell -\eps$,   $0<2 \eps <\ell$, and $v(r) \leq -m+\delta$ for all $r \in [\ell - 2\eps, \ell + 2 \eps]$. Because of
$\tilde r_m \geq r_0$, we also know that $v(r) \leq 0$ for all $r \geq \ell - \eps$. 

	We partition $\R^d$ into the cubes $C(\vect{k})$ of side-length $\ell$, as described before Lemma \ref{lem:cells}.  Let $\mathcal{R} \subset \mathcal{C}$ be a finite connected collection of cubes. Set $n:= |\mathcal{R}|$.  Consider the events
	 $\Omega^0(\mathcal{R})$ that there is no particle in $\Lambda_\mathcal{R}:=\cup_{C\in \mathcal{R}} C$ and 
	 $\Omega^1(\mathcal{R})$ that every cube $C \in \mathcal{R}$ contains exactly one particle, and this particle has distance $<\eps$ to the center of the cube.

	 Choose $L \in \ell \N$ large enough so that $\Lambda_\mathcal{R} \subset [-L,L]^d =:\Lambda$ and let $P_\Lambda = P_{\beta,\mu,\Lambda \mid \emptyset}$ 
	be the grand-canonical $(\beta,\mu)$-Gibbs measure in $\Lambda$ with free boundary conditions; write $\Xi_\Lambda = \Xi_{\Lambda\mid \emptyset}(\beta,\mu)$ for the associated partition function. Fix an arbitrary numbering $C_1,\ldots,C_n$ of the cells of $\mathcal{R}$ and write $\vect{u}^m$ for the center of $C_m$.  
	 Note that if $\mathcal{R}$ is connected, each cube has at least one neighbor. 
	We have 
	 \begin{align*} 
	 	P_\Lambda(\Omega^1(\mathcal{R})) 
	 						& = \frac{1}{\Xi_\Lambda} \sum_{k=0}^\infty \frac{z^{n+k}}{k!}  
	 							\int_{B(\vect{u}^1, \eps)} \dd x_1 \cdots 	
	 							\int_{B(\vect{u}^n, \eps)} \dd x_n  \\
	 						& \qquad \qquad \times 
	 							\int_{(\Lambda \setminus \Lambda_\mathcal{R})^k} \dd y_1\cdots \dd y_k \, 
	 								e^{-\beta U(x_1,\ldots,x_n,y_1,\ldots,y_k) }.
	\end{align*} 
	Because of our choice of $\ell$ and $\eps$, omitting the interaction between $y_k$'s and $x_i$'s in the integrand can only decrease the Boltzmann weight. Since in addition for every $\vect{x} \in \times_1^n B(u^i,\eps)$
	\begin{equation*}
		U(x_1,\ldots,x_n) \leq (-m+\delta) (n-1), 
	\end{equation*} 
	we obtain 
	\begin{equation*} 
		P_\Lambda\bigl(\Omega^1(\mathcal{R}) \bigr) \geq z^n |B(0,\eps)|^n e^{\beta (m-\delta) (n-1)  } P_\Lambda( \Omega^0(\mathcal{R})).
	\end{equation*}
	The same argument applies to finite volume Gibbs measures with tempered boundary conditions $\zeta$. Eq. \eqref{eq:dlr}  then shows that the previous inequality holds for every $P \in \mathcal{G}(\beta,\mu)$. It follows that, for every $P\in \mathcal{G}(\beta,\mu)$, 
	 \begin{equation*}
	 		P(\Omega^0(\mathcal{R}))  
	 			 \leq \exp \Bigl[ - \beta |\mathcal{R}|\Bigl( \mu + \bigl(1- \frac{1}{|\mathcal{R}|} \bigr) (m-\delta) - \beta^{-1} \log |B(0,\eps)| \Bigr) \Bigr]. 
	 	\end{equation*}
	Lemma \ref{lem:cells} applies and shows that if 
	\begin{equation}  \label{eq:mulb}	
		\mu > - m +\delta - \beta ^{-1} \log  |B(0,\eps)| + \beta^{-1} \alpha_d
	\end{equation}
	then, $P$-almost surely, there is an infinite connected set of occupied cubes. Since the maximum distance between  particles in two adjacent cubes is $\sqrt{d+3}\, \ell$, it follows that the particle configuration $\omega$ has $P$-almost surely an infinite $R$-cluster, for every $R\geq \sqrt{d+3}\, \ell = \sqrt{d+3}(\tilde r_m + \eps)$.  	
	
	To conclude, fix $R> \sqrt{d+3}\, \tilde r_m$. Choose $\delta, \eps, \ell$ as above, with the additional requirement that $R> \sqrt{d+3} \ell$. Let $\beta(\delta,\eps)>(\alpha_d - \log |B(0,\eps)|)/\delta$. Then, for every $\beta \geq \beta(\delta,\eps)$, every $\mu > -m + 2\delta$, and every $P \in \mathcal{G}(\beta,\mu)$, there is an infinite cluster, $P$-almost surely. It follows that for sufficiently large $\beta$, $\mu_+(\beta;R_m) \leq - m +2 \delta$. 
Since $\delta$ can be chosen arbitrarily small, the proof is complete. 	
	\end{proof}	 

%
%
	 
\begin{proof}[Proof of (1) in Theorem \ref{thm:perco-gc}]	 
		For $\beta>0$ and $k \in \N$, define the cluster partition functions 
		\begin{equation*} 
			Z_k^\cl(\beta):= \frac{1}{k!} \int_{(\R^d)^{k-1}} e^{-\beta U(0,x_2,\ldots,x_k)} 
					\mathbf{1}( \{0,x_2,\ldots,x_k\} \text{ is connected} \bigr) \dd x_2 \ldots \dd x_k 
		\end{equation*} 
		and let $R^\cl(\beta)$ be the radius of convergence of $\sum_{k=1}^\infty z^k Z_k^\cl(\beta)$, compare \cite[Prop. 3.3]{muermann75}).
		Let $P \in \mathcal{G}(\beta,\mu)$ be a Gibbs measure that can be obtained as a limit of finite volume, grand canonical Gibbs measures with empty boundary conditions. 
		Suppose that $z < R^\cl(\beta)$. Then $P(\text{there is an infinite cluster}) =0$
	\cite[Theorem 3.1]{muermann75}. M{\"u}rmann's proof moreover yields, for every cube $\Lambda \subset \R^d$ and all $k \in \N$, the bound 
		\begin{align*} 
				& P\Bigl( \exists x \in \omega \cap\Lambda:  |\mathcal{C}_\omega(x)| =k \Bigr) \\
				& \quad \leq  \frac{z^k}{k!} \int_{(\R^d)^k} e^{-\beta U(x_1,\ldots,x_k)} \mathbf{1}\bigl( \exists j \in \{1,\ldots,k\}:\ x_j \in \Lambda\bigr) \dd x_1 \ldots \dd x_k \\
				& \quad \leq k |\Lambda| z^k Z_k^\cl(\beta). 
		\end{align*} 
		Let $\xi>0$ be as a in the proof of Lemma \ref{lem:bc}. 
		If $P$ is shift-invariant, Eq. \eqref{eq:gpalm}  and Ruelle's superstability bounds \cite{ruelle70} show that  
		\begin{align*}
			|\Lambda| P^\circ( |\mathcal{C}_\omega(0)| =k) & = 
				\int_\Omega \Bigl|\{ x \in \omega \cap\Lambda:  |\mathcal{C}_\omega(x)| =k \}| \Bigr) P(\dd \omega) \\
				& = P\Bigl( \exists x \in \omega \cap\Lambda:  |\mathcal{C}_\omega(x)| =k \Bigr) 
					+O( (\xi |\Lambda|)^2 )
		\end{align*}		
		as $|\Lambda|\to 0$. It follows that 
		\begin{equation} \label{eq:rkbound} 
				\rho_k(P) \leq z^k Z_k^\cl(\beta). 
		\end{equation} 
		In order to go from shift-invariant limits of finite volume Gibbs measures to general Gibbs measures, we use the theory of Mayer expansions. It is well-known that for every $\beta>0$, some strictly positive $R^\may(\beta)>0$, and activities $z = \exp(\beta \mu) <R^\may(\beta)$, there is a unique Gibbs measure $P_{\beta,\mu} \in \mathcal{G}(\beta,\mu)$; furthermore, the pressure and the correlation functions admit absolutely convergent expansions in powers of $z$ (with temperature-dependent coefficients)  \cite[Theorem 5.7]{ruelle70}. The measure $P_{\beta,\mu}$ is shift-invariant. 		
		Since every finite volume Gibbs measure converges to an infinite volume Gibbs measure, this limit must be $P_{\beta,\mu}$, and we can apply the previous considerations to $P_{\beta,\mu}$. Thus we see that when $z < \min(R^\cl(\beta), R^\may(\beta))$, there is a unique Gibbs measure. It is shift-invariant, has no infinite cluster ($P$-almost surely), and satisfies Eq. \eqref{eq:rkbound}. 
		
	Next,
		recall 
		\begin{equation*} 
				Z_k^\cl(\beta) \leq e^{- \beta k e_\infty} e^{k} |B(0,R)|^{k-1},\quad 
					R^\may(\beta) \geq \frac{\exp(\beta e_\infty)}{\beta |||v|||}
		\end{equation*} 
		where $|||v|||:=|B(0,r_\mathrm{hc})| + \int_{|x|>r_\mathrm{hc}} |v(|x|)| \dd x$, see 
		\cite[Proof of Lemma 4.1]{jkm11} or \cite[Proof of Prop. 3.1]{muermann75} for the first inequality and \cite[Theorem 2.1]{poghosyan-ueltschi09} for the second inequality. We deduce
		\begin{equation*}
				\liminf_{\beta \to \infty} \beta^{-1} \log \min\bigl(R^\cl(\beta),R^\may(\beta)\bigr)
					\geq e_\infty. 
		 \end{equation*} 
		 Since every $\mu_-<\beta^{-1} \log \min(R^\cl(\beta), R^\may(\beta))$ satisfies condition \eqref{eq:cond-mu-minus}, we obtain the desired lower bound on $\mu_-(\beta;R)$. The upper bound \eqref{eq:decay-gc} of $\rho_k(P)$ follows from Eq. \eqref{eq:rkbound} and the upper bound of $Z_k^\cl(\beta)$. 
\end{proof}

\subsection{Canonical ensemble} 

Theorem \ref{thm:perco-can} is deduced from Theorem \ref{thm:perco-gc} with the help of a good control of the density as a function of the chemical potential. Recall that the pressure $p(\beta,\mu) = \sup_{\rho}(\rho \mu - f(\beta,\rho))$ is a convex function of $\mu$. Therefore the derivative $\rho(\beta,\mu):=\partial_\mu p(\beta,\mu)$ exists almost everywhere and is an increasing function of $\mu$. Moreover, if $\rho = \rho(\beta,\mu)$, then $\mathcal{G}_\theta(\beta,\rho)\subset \mathcal{G}(\beta,\mu)$.
If $p(\beta,\mu)$ has different left and right derivatives $\rho_\mathrm{l}$ and $\rho_\mathrm{r}$ with respect to $\mu$,  the previous inclusion holds for every $\rho \in [\rho_\mathrm{l},\rho_\mathrm{r}]$. 

\begin{proof}[Proof of (1) in Theorem \ref{thm:perco-can}] 
	Remember $R^\may(\beta)$ and $R^\cl(\beta)$ from the proof of (1) in Theorem \ref{thm:perco-gc}. 
	For $\exp(\beta\mu)<R^\may(\beta)$, the pressure is differentiable in $\mu$ and $\rho(\beta,\mu)$ is well-defined. 
	Fix $\eps>0$ and let $\beta_\eps >0$ large enough so that for $\beta \geq \beta_\eps$, 
	$\exp( \beta [e_\infty - \eps]) < \min (R^\may(\beta), R^\cl(\beta))$. 
	For $\beta \geq \beta_\eps$ and $\rho <\rho(\beta,e_\infty - \eps)$, we  know that $\rho = \rho(\beta,\mu)$ for a \emph{unique} $\mu$; for this $\mu$, $\mathcal{G}_\theta(\beta,\rho) = \mathcal{G}(\beta,\mu) = \{P\}$ with a unique Gibbs measure $P$. From the proof of Theorem \ref{thm:perco-gc}, we see that $P$ assigns probability zero to the event that there is an infinite cluster. As a consequence, $\rho_-(\beta;R) \geq \rho(\beta,e_\infty - \eps)$. 
	Since 
	\begin{equation} \label{eq:rho-low-temp} 
		\lim_{\beta \to \infty}\beta^{-1} \log \rho(\beta,e_\infty - \eps) 
				= - \inf_{k \in \N} \Bigl(E_k - k (e_\infty - \eps) \Bigr)
	\end{equation}	
	and $\inf_{\eps>0} \inf_{k \in \N} [E_k - k (e_\infty - \eps)] = \nu^*$
	\cite{jansen12}, we deduce $\rho_-(\beta;R) \geq \exp(- \beta \nu^*(1+o(1)))$. 
	
	In addition, if $\nu >\nu^*$ and $\beta$ is sufficiently large, we see that $\rho = \rho(\beta,\mu)$ for a unique $\mu < \beta^{-1} \log \min (R^\may(\beta),R^\cl(\beta))$, 
	and the remaining part of Theorem \ref{thm:perco-can} follows from the corresponding statement in Theorem \ref{thm:perco-gc}.  
	The claim on the uniformity of the constants stated after Theorem \ref{thm:perco-can}  follows by combining the inequalities \eqref{eq:decay-gc}, \eqref{eq:rkbound}, and \eqref{eq:rho-low-temp}. 
\end{proof}

\begin{proof}[Proof of (2) in Theorem \ref{thm:perco-can}]
	Fix $\eps>0$. By Theorem \ref{thm:perco-gc}, there is a $\beta_\eps>0$ such that for every $\beta \geq \beta_\eps$, $\mu_+(\beta;R_m) \leq - m + \eps$. Let $\rho(\beta,-m+\eps)$ be the left derivative of $p(\beta,\mu)$ with respect to $\mu$ at $\mu = -m/2+\eps$. If $\rho \geq \rho_\eps(\beta)$, then $\mathcal{G}_\theta(\beta,\rho) \subset \mathcal{G}(\beta,\mu)$ for some $\mu \geq - m +\eps$. When $\beta \geq \beta_\eps$, it follows that this $\mu$ is larger than $\mu_+(\beta;R_m)$, and Theorem \ref{thm:perco-gc} tells us that for every $P \in \mathcal{G}_\theta(\beta,\rho) \subset \mathcal{G}(\beta,\mu)$, there is an infinite cluster, $P$-almost surely. Thus $\rho_+(\beta;R_m) \leq  \rho(\beta,-m+\eps)$ and $\liminf_{\beta \to \infty} \rho_+(\beta,\mu) \leq \rho_m$. 
\end{proof}

\begin{proof}[Proof of Prop. \ref{prop:soft-trans}] 
	Prop. \ref{prop:soft-trans} is a consequence of Theorem 1.8 in \cite{jkm11} and our Theorem \ref{thm:equiv}: the result from \cite{jkm11} says that the bound of our proposition holds for every minimizer $(\rho_k)_{k\in \N}$ of $f(\beta,\rho, \cdot)$, and Theorem \ref{thm:equiv} says that for every $P \in \mathcal{G}_\theta(\beta,\rho)$, the vector $(\rho_k(P))_{k \in \N}$ is a minimizer of $f(\beta,\rho,\cdot)$.  
\end{proof}

\appendix

\section{Lattice gas and Ising model} \label{app:lattice}  

Here we explain how our results relate to known results for lattice systems. 
We shall be very sketchy and refer the reader to to the review \cite{ghm} and the references therein for precise definitions and proofs.

Consider the nearest neighbor lattice gas in $\Z^d$  ($d\geq 2$)
with hard-core on-site interaction and attractive nearest-neighbor interaction. As is well-known, this model can be recast as an Ising model. Occupied lattice sites ($n_x=1$) are mapped to spin $\sigma_x=+1$ and empty lattice sites ($n_x=0$) to spin $\sigma_x = -1$; thus $\sigma_x = 2 n_x-1$. Formally, we have 
\begin{equation*}
	- \frac{J}{2}\sum_{|x-y|=1} n_xn_y - \mu \sum_{x} n_x 
		= - \frac{J}{8} \sum_{|x-y|=1} \sigma_x \sigma_y - \frac{dJ+ \mu}{2} \sum_x \sigma_x + \const, 
\end{equation*} 
which determines the external magnetic field $h$ of the Ising model in terms of the chemical potential $\mu$ of the lattice gas as $h=(\mu + dJ)/2$.  The parameter $J>0$ measures both the strength of attraction between particles and the strength of the ferromagnetic coupling between spins. 

The relevant notion of percolation is \emph{dependent site percolation}. \emph{Dependent} refers to the underlying measure on $\{0,1\}^{\Z^d}$, which is a Gibbs measure rather than a product of Bernoulli measures, and \emph{site percolation} refers to the notion of connectivity -- two occupied lattice sites $x,y\in \Z^d$ are connected if there is a path $(x_1,\ldots,x_n)$ of nearest neighbor ($|x_{j+1}-x_j|=1$) joining $x_1=x$ and $x_n=y$, such that each site of the path is occupied, $n_{x_j}=1$. In the Ising picture, we are interested in \emph{percolation of $+$clusters}. 

Our first remark is that at low temperature, the phase transition and the percolation transition coincide; this observation, as alluded to in the introduction, is the driving motivation for the present article's investigation.  Fix a temperature $T>0$ and vary the external field $h$ (or the chemical potential $\mu$). At temperatures above the \emph{Curie temperature} $T_\mathrm{C}>0$, there is no phase transition (the pressure stays analytic and the Gibbs measure is unique); at $T<T_\mathrm{C}$, there is a first-order phase transition at $h=0$. On the other hand, let $P_{T,h}^+$ be the Gibbs measure with $+$ boundary conditions. We know that for all $T>0$, there is a threshold $h(T)\in \R$ such that the $P_{T,h}^+$-probability of having an infinite cluster is $0$ at external fields $h<h(T)$, and $1$ at fields $h>h(T)$. Furthermore, there is a temperature $T_+>0$ such that $h(T) =0$ for $T<T_+$ and $h(T) <0$ for $T>T_+$ \cite{aizenman-bricmont-lebowitz87}, and it is known that $T_+<T_\mathrm{C}$ in high dimensions. Thus at temperatures above the Curie temperature, there is a percolation transition but no phase transition; at temperatures between $T_+$ and $T_\mathrm{C}$, as $h$ is increased, the percolation transition happens \emph{before} the phase transition; and below $T_+$, the percolation transition coincides with the phase transition. 

The next observation is that the value at which the transition takes place is consistent with our grand-canonical Theorem \ref{thm:perco-gc}: indeed, the zero external field $h=0$ corresponds, in the lattice gas picture, to a chemical potential $\mu = - d J$, which can be interpreted as a ground state energy per particle -- if in $\Z^d$ every lattice site is occupied, the energy per particle is $e_\infty = - dJ$. 

Finally, this agreement of thresholds extends to the canonical ensemble. At $T<T_\mathrm{C}$ and external field $h=0$, there are two shift-invariant Gibbs measures $P^\pm_{T,h}$, with magnetizations $m_{\pm}(T)$. Low temperature contour expansions show that as $T\to 0$, 
$m_\pm (T) = \pm 1 + O( \exp( - J/T))$. Transforming as $\rho_\pm(T) = (m_\pm(T)+1)/2$,  we obtain that the corresponding curves for the lattice gas satisfy $\rho_-(T) = O(\exp( - J/T))$ and $\rho_+(T) = 1 + O(\exp(-J/T))$, which compares nicely with Theorem \ref{thm:perco-can}: for $T<T_+$, the magnetizations $m_\pm(T)$ of the Ising model are also percolation thresholds (this is not true for $T_+<T<T_\mathrm{C}$).

\subsection*{Acknowledgments}
I gratefully acknowledge financial support by the
DFG-Forscher\-gruppe~FOR718 ``Analysis and stochastics in complex physical systems'' and the Hausdorff Research Institute for Mathematics (Bonn, Germany). I am indebted to Prof. A. van Enter for helpful remarks on an earlier version of the manuscript. 


\providecommand{\bysame}{\leavevmode\hbox to3em{\hrulefill}\thinspace}
\providecommand{\MR}{\relax\ifhmode\unskip\space\fi MR }
\providecommand{\MRhref}[2]{%
  \href{http://www.ams.org/mathscinet-getitem?mr=#1}{#2}
}
\providecommand{\href}[2]{#2}

\end{document}